\documentclass[journal,twoside]{IEEEtran}

\usepackage[table]{xcolor}
\usepackage{xcolor}
\usepackage{amsmath,amsfonts}
\usepackage{algorithmic}
\usepackage{algorithm}
\usepackage{array}
\usepackage[caption=false,font=normalsize,labelfont=sf,textfont=sf]{subfig}
\usepackage{textcomp}
\usepackage{stfloats}
\usepackage{url}
\usepackage{verbatim}
\usepackage{graphicx}
\usepackage{cite}
\usepackage[thinc]{esdiff}
\usepackage{multirow}
\usepackage{booktabs}
\usepackage[margin=1in]{geometry}

\usepackage[normalem]{ulem}

\IEEEoverridecommandlockouts

\usepackage{nicefrac}
\usepackage{subfig}

\usepackage{pgfplots}
\usepackage{tikzscale}
\usepackage{siunitx}
\usepackage{multirow}
\usepackage[font=small]{caption}
\usepackage[font=small]{subcaption}

\usepackage{hyperref}
\usepackage[capitalise]{cleveref}

\usepackage{enumitem}

\usepackage{amssymb}
\usepackage{pifont}
\usepackage{booktabs}

\usepackage{amsthm}

\usepackage{optidef}
\usepackage{enumitem}

\newtheorem{rem}{Remark}
\newtheorem{assum}{Assumption}
\newtheorem{definition}{Definition}
\newtheorem{prop}{Proposition}
\newtheorem{prob}{Problem}

\newtheorem{theorem}{Theorem}

\newtheorem{lemma}{Lemma}

\newcommand{\distime}{t}
\newcommand{\distimenext}{t+1}
\newcommand{\step}{k}
\newcommand{\stepnext}{k+1}
\newcommand{\firstagent}{i}
\newcommand{\secondagent}{j}

\newcommand{\filledcircle}[2][gray!80,fill=gray!80]{\tikz[baseline=-0.5ex]\draw[#1,radius=#2] (0,0) circle ;}%

\newcommand{\filledsquare}[2][fill=gray!80,draw=gray!80]{%
  \tikz[baseline=-0.55ex] \node[
    inner sep=0pt,
    minimum size=#2,
    #1
  ] (sq) {};
}

\definecolor{SetPurple}{HTML}{9673A6}
\definecolor{SetBlue}{HTML}{6C8EBF}
\definecolor{ContractPurple}{HTML}{9933FF}
\definecolor{ContractBlue}{HTML}{3333FF}

\makeatletter
\def\@opargbegintheorem#1#2#3{\trivlist
   \item[]{\bfseries #1\ #2\ (#3)} \itshape}
\makeatother

\usepackage{mathtools}

\usepackage{color}

\title{Anytime Plug-and-Play Control with \\ Contract-Based Distributed MPC 
}

\author{Sabrina Bodmer, Danilo Saccani, Melanie N. Zeilinger, Andrea Carron
\thanks{This research has been supported by the Swiss National Science Foundation under the NCCR Automation (grant 51NF40\_225155).}
\thanks{S. Bodmer, A. Carron and M.N. Zeilinger are with the Institute for Dynamic Systems and Control, ETH Zurich, Switzerland. (email \texttt{\{sabodmer,carrona,mzeilinger\}@ethz.ch)}  }%
\thanks{D. Saccani is with the Institute of Mechanical Engineering, École Polytechnique Fédérale de Lausanne (EPFL), CH-1015 Lausanne, Switzerland. (email: \texttt{danilo.saccani@epfl.ch})}%
}
\usepackage{etoolbox}

\makeatletter
    \apptocmd{\@maketitle}{
        \centering
        \captionsetup{type=figure}
        \vspace{5mm}
         \includegraphics[width=1.0\linewidth]{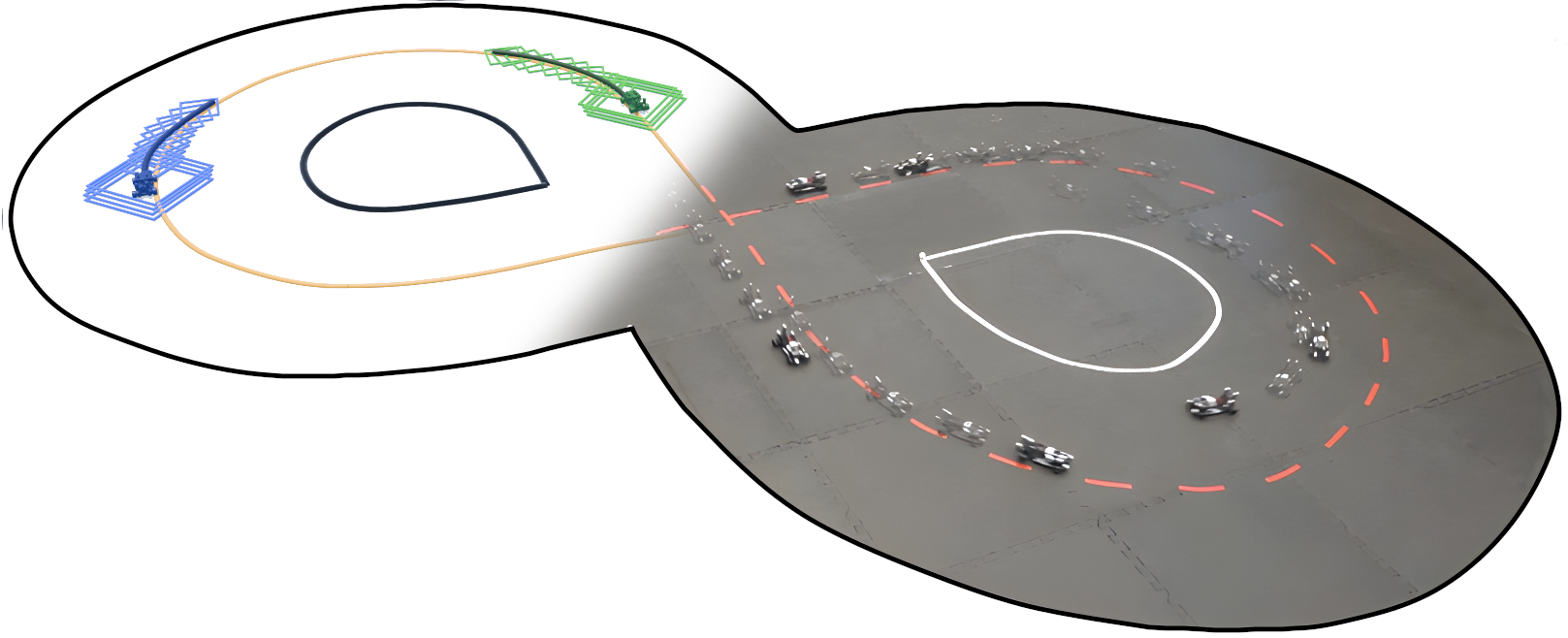}
        \captionof{figure}{Simulation and hardware experiments conducted using up to eight small-scale race cars on a figure-eight track.}
        \vspace{5mm}
        \label{fig:hardware_overview}
        \setcounter{figure}{0} 
        \vspace{-20.5pt}
    }{}{}%
\makeatother
\begin{document}
\maketitle
\setcounter{figure}{1} 

\begin{abstract}

A central challenge in many mobile multi-robot applications is that communication topologies are inherently time-varying. Agents may enter or exit the network and such changes cannot generally be restricted a priori.
This work introduces a distributed multi-agent control algorithm based on local communication that supports anytime agent joining and leaving the communication network without centralized coordination.
The method scales efficiently with the number of agents by relying on a distance-based neighbor definition and on contracts derived from predicted trajectories. The resulting contract constraints guarantee collision avoidance and constraint satisfaction.
We validate the proposed method in an autonomous multi-agent driving scenario, demonstrating effective collision avoidance in high-speed, dynamic environments with agents moving in opposite directions, in both simulated and real-world experiments.
\end{abstract}

\begin{IEEEkeywords} Predictive control for nonlinear systems, Distributed Control, Autonomous robots, Collision Avoidance
\end{IEEEkeywords}

\IEEEpeerreviewmaketitle

\noindent {\small {\bf Supplementary Material/Videos}: \url{anytime-plug-and-play.github.io}}

\section{Introduction}
\label{sec:Introduction}
\IEEEPARstart{A}{s} multi-agent robotic systems, including drones and autonomous vehicles, are increasingly being deployed in real-world applications,  efficient and safe coordination among agents has become a critical requirement~\cite{bagloee2016autonomous}. Model Predictive Control (MPC) is a popular framework for multi-agent coordination~\cite{centralized_mpc_riegger2016, centralized_mpc_hajar_2016}. Still, centralized MPC suffers from poor scalability due to cubic computational growth~\cite{centralized_comp_frejo_2012} and its reliance on global information and global communication. To overcome these limitations, distributed MPC (DMPC) approaches allow each agent to solve a local problem while exchanging limited information, such as predicted trajectories~\cite{FARINA20121088}, planned inputs~\cite{STEWART2010460}, or abstract contracts~\cite{Lucia2015contract}, with neighboring agents over a communication graph~\cite{SCATTOLINIreview}. This improves scalability and preserves privacy, but the ability of agents to coordinate effectively depends strongly on how the communication network changes over time. The way networks evolve can be categorized into three common types: 1) fixed, with constant agents and communication, 2) time-varying, with constant agents but changing neighbors, 3) plug-and-play (PnP), where agents, neighbors, and communication can all change dynamically~\cite{saccani2023model, Lucia2015contract, Zeilinger2013plugplay, riverso2012}. PnP networks can be further divided into request-based, where agents may enter or leave upon approval of the entire or part of the network\cite{Zeilinger2013plugplay, riverso2012}, and anytime, where agents may enter or leave the network at any time without the other agents' approval. The former suits systems with tightly coupled dynamics, such as power grids~\cite{riverso2012,Zeilinger2013plugplay}. At the same time, the latter reflects settings such as mobile robots, where dynamics are decoupled but constraints and communication depend on position. In such environments, controllers must ensure safe operation despite arbitrary changes in the network topology. 
Most distributed MPC methods address fixed or request-based plug-and-play topologies, but do not support uncoordinated topology changes while retaining formal safety guarantees. Our previous work~\cite{saccani2023model} considered multi-agent MPC under limited communication and a time-varying communication topology. However, the optimization problem in~\cite{saccani2023model} was formulated jointly over all agents in each connected component and therefore remained centralized and coupled at the formulation level. Although distributed numerical solution methods were discussed as a possible implementation route,~\cite{saccani2023model} did not derive or evaluate independent local MPC problems. In contrast, the present work introduces time-varying cells and safety envelopes that decouple the collision-avoidance constraints, allowing every agent to solve an independent local FHOCP after a single exchange of predicted trajectories with its current neighbors. This yields a non-iterative distributed implementation with recursive-feasibility and collision-avoidance guarantees under anytime plug-and-play operation.

\subsubsection*{Contributions}
This paper makes three main contributions.
First, we propose a non-iterative distributed MPC framework that guarantees collision avoidance and constraint satisfaction under plug-and-play operations without requiring approval from other agents (anytime plug-and-play).
The key tools are time-varying cells and safety envelopes, enabling fully decoupled collision-avoidance constraints via a contract-based mechanism. 
Second, we show how to augment the safety-guaranteed framework for a given autonomous driving example, preserving guarantees while enabling aggressive maneuvers.
Third, we validate the approach in high-fidelity simulation and real-world experiments with six to eight autonomous race cars on a figure-eight track (Fig.~\ref{fig:hardware_overview}), demonstrating applicability in high-speed, dynamic settings with frequent topology changes.

\subsubsection*{Related works}
When the communication network is fixed, distributed optimization methods such as dual ascent~\cite{dual_descent_giselsson, dual_descent_necoara} and Alternating Direction Method of Multipliers (ADMM)~\cite{admm_boyd, admm_rey} are widely used to coordinate local subproblems. These approaches are well-suited for sparse couplings but often need many iterations and are biased toward linear or convex formulations, which limits real-time use~\cite{conte_computation, KOHLER20191}. Plug-and-play MPC for linear dynamics addresses the fixed network issue, supporting modularity and request-based PnP operations~\cite{Zeilinger2013plugplay, riverso2012}. Coalitional MPC extends this idea by allowing agents to merge or split coalitions to improve performance, but is typically restricted to linear settings and requires nontrivial coordination overhead~\cite{masero2023robust,maxim2024coalitional}.
For nonlinear dynamics, distributed optimization methods such as Augmented Lagrangian Alternating Direction Inexact Newton (ALADIN)~\cite{ALADIN_based_distributed_MPC_for_dynamic_partitioning} or distributed Sequential Quadratic Programming (SQP)/ADMM~\cite{stomberg2025cooperativedistributedmodelpredictive, carron2023multiagent} have been proposed. While they extend beyond linear settings, they usually rely on a centralized coordinator~\cite{ZanonOptimalCoordinationIntersection} or require many communications/iterations to converge. In addition, these methods are inherently iterative, requiring repeated neighbor exchanges (or sequential, order-dependent updates) within each MPC step to coordinate coupled constraints~\cite{sequential_multiplex_control, sequential_dmpc}.
Application-driven approaches demonstrate partial plug-and-play capabilities: for example, hierarchical nonlinear MPC in buildings~\cite{wang2024demand} supports adding or removing devices and rooms, but depends on centralized layers and application-specific guarantees, whereas distributed nonlinear MPC (NMPC) frameworks~\cite{burk2022modular} enable runtime inclusion/removal of agents in smart grid applications, but without formal guarantees.
To reduce the computational and coordination load of distributed optimization algorithms, non-iterative distributed MPC variants have been explored. These approaches exchange predicted trajectories and adapt constraints~\cite{FARINA20121088}, introduce consistency sets~\cite{wiltz2022consistency}, or apply contracts and reachable sets for coupling~\cite{Lucia2015contract}. However, their guarantees are tied to fixed networks, hard to compute robust invariant sets, and can block agents from joining or leaving to preserve feasibility.
Beyond MPC-centric designs, other coordination schemes have been investigated: priority-based ordering~\cite{Alrifaee_priority}, game-theoretic approaches~\cite{karma, fieni2024gametheoreticenergymanagementstrategies, liniger2019noncooperativegameapproachautonomous}, traffic-specific rules~\cite{AssistingDrivers, BidirectionalMixedTraffic, priority_based_highway_merging}, Voronoi-based collision avoidance~\cite{Rakovic2021, Luis_2020, Soria}, and CBF-based safety certificates~\cite{goarin2024decentralizednonlinearmodelpredictive, Dimos_CBF}. Learning-based approaches have also been widely investigated for multi-agent control. However, since our focus is on methods that provide formal guarantees on safety and feasibility, we do not review this line of work in detail. Finally, safety-verification approaches can process request-based plug-and-play operations, but result in conservative behavior~\cite{carron_plug_and_play_safety_verification,ohnemus2026distributed}. 
In contrast, our contract-based DMPC targets fully distributed, anytime plug-and-play with neighbor-only communication and formal recursive feasibility and safety guarantees. A category-wise comparison, covering assumptions, communication requirements, and guarantees, is deferred to the discussion Section~\ref{sec: Discussion} and summarized in Table~\ref{tab:comparison}.

\subsubsection*{Notation}

Throughout the paper, we denote by $\mathbb{R}$ the set of real numbers, by $\mathbb{N}$ the set of non-negative integers and by $\mathbb{N}_a^b=\{ n \in \mathbb{N} \ | \ a\leq n \leq b \}$. The zero vector in $\mathbb{R}^n$ is denoted by $\mathbf{0}$. The discrete-time index is denoted by $\distime \in \mathbb{N}$, with sampling time $T_s$. We denote by $x_{\step|\distime}$ the value of a variable at time $\distime+\step$ predicted at time $\distime$. More generally, for agent-indexed quantities we write $x_{\firstagent,\step|\distime}$, and for horizon-indexed collections we use the shorthand $\mathcal{P}_{\firstagent,\distime}^{N+1}=\{\mathcal{P}_{\firstagent,0|\distime},\dots,\mathcal{P}_{\firstagent,N|\distime}\}$, or alternatively $\mathcal{P}_{\firstagent,\distime}^{0:N}$ when the starting and ending indices are important.
For $c\in\mathbb{R}^n$ and $\epsilon>0$, we define the open ball of radius $\epsilon$ centered at $c$ as $\mathcal{B}_{\epsilon}(c):=\{x\in\mathbb{R}^n : \|x - c\| < \epsilon \}$. When $c=\mathbf{0}$, we simply write $\mathcal{B}_{\epsilon}$. The sum of two sets $\mathcal{A}$, $\mathcal{B}\subseteq\mathbb{R}^n$, denoted as $\mathcal{A}\oplus\mathcal{B}$, is $\{a+b \ | \ a\in \mathcal{A},\ b\in \mathcal{B}\}$ while  $\mathcal{A}\ominus\mathcal{B}$ is $\{x\in\mathbb{R}^n \ | \ x+b \in \mathcal{A} , \ \forall b \in \mathcal{B} \}$. Finally, given a finite set $\mathcal{N}$, we denote its cardinality by $|\mathcal{N}|$.
When considering finite horizon optimal control problems (FHOCP), the length of the horizon is denoted by $N$ and each step along the horizon by $\step \in \mathbb{N}_0^N$. 


\section{Problem Description}\label{sec:problem_description}
In this section we introduce the system setup, define the communication capabilities among agents, and state the control problem. We further consider the necessary safety-related requirements and resulting problem formulation.

\subsection{System setup}
We consider a finite population of mobile robotic agents indexed by the set $\mathcal 
{M}$.  Each agent $\firstagent \in \mathcal{M}$ is described by nonlinear dynamics
\begin{align}\label{eq:dynamics}
    x_{\firstagent}(\distimenext) = & f_{\firstagent}(x_{\firstagent}(\distime), u_{\firstagent}(\distime)), \\
    p_{\firstagent}(\distime) = & C_{\firstagent} x_{\firstagent}(\distime),
\end{align}
where $x_{\firstagent}\in\mathbb{R}^{n_i}, \ u_{\firstagent}\in\mathbb{R}^{m_i}$ describe the state and input of the system and $f_{\firstagent}:\mathbb{R}^{n_i}\times \mathbb{R}^{m_i}\rightarrow \mathbb{R}^{n_i}$. The position of the agent $p_{\firstagent} \in \mathbb{R}^{3}$ is defined as a subset of the states, where $C_{\firstagent} \in \mathbb{R}^{ 3\times n_i}$ is an appropriate selection matrix. The agent is assumed to have a volume, and the position of the agent refers to its center of mass. The agent can be approximated by a ball, $\mathcal{B}_{\epsilon/2}$ representing the physical encumbrance of the agent.
The system is subject to local state and input constraints for each agent:
\begin{align}
    &x_{\firstagent}(\distime) \in \mathcal{X}_{\firstagent}, \label{eq:stateconstraints}  \\
    &u_{\firstagent}(\distime) \in \mathcal{U}_{\firstagent}, \quad \forall i\in \mathcal{M}, \quad\forall \distime \geq 0. \label{eq:inputconstraints}
\end{align}
Let us consider, without loss of generality, that $p_{\firstagent}(\distime)$ is at the top of the state vector $x_{\firstagent}(\distime)$ and introduce the operator $\phi (p_{\firstagent}) = [p_{\firstagent}^\top,0, \dots, 0]^\top\in\mathbb{R}^{n_{\firstagent}}$ that generates a vector of the dimension of $x_{\firstagent}(\distime)$ with first entry, corresponding to the position $p_{\firstagent}$ of the ${\firstagent}$-th agent.
\begin{assum} \label{ass:posInv}
The agents' dynamics~\eqref{eq:dynamics} are position invariant, i.e. $\forall p_{\secondagent} \in \mathbb{R}^3,\ x_{\firstagent}(\distime),\ u_{\firstagent}(\distime), \ f_{\firstagent}(x_{\firstagent}(\distime)+\phi(p_{\secondagent}),u_{\firstagent}(\distime))=x_{\firstagent}(\distimenext)+\phi(p_{\secondagent})$.
\end{assum}
Furthermore, the system is subject to coupling constraints to enforce collision avoidance among agents,
\begin{align}\label{eq:coupling_constraint}
    h(x_{\firstagent}(\distime), x_{\secondagent}(\distime)) \geq 0, \quad \forall \firstagent,\secondagent \in \mathcal{M}, \quad \firstagent \neq \secondagent, \quad \forall \distime\geq 0.
\end{align}

To streamline the exposition, we assume symmetric agents and that constraint~\eqref{eq:coupling_constraint} can be expressed as
\begin{equation} \label{eq:collision_avoidance}
     h(x_{\firstagent}(\distime), x_{\secondagent}(\distime)) = \| p_{\firstagent}(\distime)-p_{\secondagent}(\distime) \| - \epsilon. 
\end{equation}
\begin{rem}
    Extensions to arbitrary agent geometries can be done by considering a suitable metric that includes the orientation in addition to the position for collision avoidance.
\end{rem}

\subsection{Communication}
\label{sec:communication}
The communication set defines the physical range in space within which an agent can communicate with its neighbors. We assume homogeneous communication, meaning that all agents have the same communication capabilities. This assumption is not strictly required, and a similar strategy to the one presented in this work could be adopted, with suitable modifications, in the heterogeneous case, however this is omitted for clarity. The communication set $\mathcal{R}(p_{\firstagent}) \subset \mathbb{R}^3$ is a non-empty closed convex set centered at the current position of the agent $p_{\firstagent}$.
For simplicity, this set is under-approximated by a centrally symmetric polytope centered at the agent's current position $p_{\firstagent}$,
\begin{align}
    \mathcal{R}(p_{\firstagent}) := \{\xi\in\mathbb{R}^3 : A (\xi-p_\firstagent) \leq b\},
\end{align}
for suitable matrices $A\in\mathbb{R}^{n_c\times 3}$ and $b\in\mathbb{R}^{n_c}$ chosen such that $\mathcal{R}(\mathbf{0}_3)$ is centrally symmetric.
Thus, if the position of an agent $\secondagent \in \mathcal{M}$, lies within the communication set of agent $\firstagent$, then $\secondagent$ is considered a neighbor of $\firstagent$ and bidirectional communication is established. 

As an example, consider the case where two agents can communicate if the Euclidean distance between them is less than or equal to a given radius $r$. 
\begin{equation}
    \| p_{\firstagent} - p_{\secondagent} \| \leq r.
\end{equation}
In this case, the communication set can be represented by any polytopic under-approximation of a ball with radius $r$, centered at each agent's position. A proper design of $\mathcal{R}$ should account for factors such as transmission power, receiver sensitivity, antenna characteristics, operating frequency, environmental conditions, line-of-sight availability, or interference - a detailed definition is application-specific and beyond the scope of this paper. \\
An equivalent representation of the set of neighboring agents can be obtained using the intersection of two sets centered at the current positions of the agents. Specifically, agents 
$\firstagent$ and 
$\secondagent$ are neighbors if the following holds:
\begin{equation} \label{eq:neighbors}
\bar{\mathcal{R}}(p_{\firstagent}) \cap \bar{\mathcal{R}}(p_{\secondagent}) \neq \emptyset,
\end{equation}
where 
$\bar{\mathcal{R}}(p_\firstagent)$ is defined as a tightened version of $\mathcal{R}(p_\firstagent)$, such that:
\begin{equation} \label{eq:comm_set}
\bar{\mathcal{R}}(p_{\firstagent}) := \left\{\xi \in \mathbb{R}^3 : A (\xi - p_{\firstagent}) \leq \frac{b}{2} \right\},
\end{equation}
and we assume that the tightened communication region contains the physical encumbrance of the agent, i.e., $\mathcal{B}_{\epsilon/2} \subseteq \bar{\mathcal{R}}(\mathbf{0}_3)$.
Therefore, the set of neighbors of agent $\firstagent$ at time $\distime$ is
\begin{align}\label{eq:neighbor_set}
\mathcal{N}_{\firstagent}(\distime)
=
\{\secondagent \in \mathcal{M} \setminus \{\firstagent\}: \bar{\mathcal{R}}(p_{\firstagent}(\distime)) \cap \bar{\mathcal{R}}(p_{\secondagent}(\distime)) \neq \emptyset \}.
\end{align}
For ease of readability, the time dependency of $\mathcal{N}_{\firstagent}(\distime)$ is omitted when not crucial.

The neighbor sets induce a time-varying undirected communication graph, where an edge is present between agents $\firstagent$ and $\secondagent$ if and only if $\secondagent\in\mathcal{N}_{\firstagent}(\distime)$. In the considered anytime plug-and-play setting, this graph may change arbitrarily over time as agents move, enter, or leave the communication range. Since the proposed method is formulated entirely in terms of local neighbor sets, no explicit graph-level construction is needed in the remainder of the paper.

\subsection{Problem formulation}
We are now in a position to state the problem we aim to solve.

\begin{prob} \label{pr:mainProb}
Consider a set of agents $\mathcal{M}$, where each agent is governed by the nonlinear dynamics~\eqref{eq:dynamics} and subject to state and input constraints~\eqref{eq:stateconstraints},~\eqref{eq:inputconstraints}. Neighbors are determined according to~\eqref{eq:neighbor_set}, leading to a time-varying plug-and-play communication topology that depends on the agents' positions. We aim to design a distributed control strategy that relies solely on neighbor-to-neighbor communication, ensures collision avoidance among agents~\eqref{eq:coupling_constraint}, and allows each agent to minimize its own local objective.
\end{prob}

At each time-step $\distime$ this objective can be cast as the centralized FHOCP
\begin{subequations}\label{centralized_mpc}
\begin{alignat}{3}
   &\min_{\substack{x_{\firstagent, 0|\distime}, \hdots, x_{\firstagent, N| \distime}, \\
   u_{\firstagent, 0| \distime}, \hdots,u_{\firstagent, N-1|\distime}, \\
   \forall \firstagent \in\mathcal{M}}}
   && \sum_{\firstagent\in\mathcal{M}}\sum_{\step=0}^{N-1}
   J_{\firstagent}(x_{\firstagent, \step|\distime}, u_{\firstagent, \step|\distime})
   \label{centralized_mpc:cost} \\
   &\operatorname{subject~to}
   && x_{\firstagent,0|\distime} = x_{\firstagent}(\distime)
   \label{centralized_mpc:init_constr},\\
   &&
   & x_{\firstagent, \stepnext|\distime}= f_\firstagent(x_{\firstagent, \step|\distime}, u_{\firstagent, \step|\distime})
   \label{centralized_mpc:dynamics_constr}, \\
   &&
   & x_{\firstagent, \step|\distime} \in \mathcal{X}_{\firstagent}
   \label{centralized_mpc:state_constr}, \\
   &&
   & u_{\firstagent, \step|\distime} \in \mathcal{U}_{\firstagent}
   \label{centralized_mpc:input_constr}, \\
   &&
   & h(x_{\firstagent,\step|\distime},x_{\secondagent,\step|\distime}) \geq 0
   \label{centralized_mpc:collision_avoidance_constr}, \\
   &&
   & \forall \firstagent \in\mathcal{M},\ \forall \secondagent \in\mathcal{M}\setminus\{\firstagent\}, \nonumber
\end{alignat}
\end{subequations}
where dynamics~\eqref{centralized_mpc:dynamics_constr} and input constraints~\eqref{centralized_mpc:input_constr} are imposed for all $\step\in\mathbb{N}_0^{N-1}$, while the state constraints~\eqref{centralized_mpc:state_constr} and collision-avoidance constraints~\eqref{centralized_mpc:collision_avoidance_constr} are imposed for all $\step\in\mathbb{N}_0^{N}$.
The first control action of the optimal input sequence is applied in a receding-horizon fashion,
\begin{equation}
    u_{\firstagent}(\distime)=u_{\firstagent,0|\distime}^{*}.
\end{equation}

\begin{rem}
For simplicity, the exposition is restricted to a finite population of agents. This ensures that the centralized problem is well posed and keeps the notation compact. However, since the proposed construction and the corresponding feasibility and safety arguments are local and rely only on neighbor-to-neighbor interactions, the framework extends naturally to countably infinite locally finite populations, i.e., populations satisfying $|\mathcal{N}_{\firstagent}(\distime)|<\infty$ for all $\firstagent\in\mathcal{M}$ and $\distime\in\mathbb{N}$.
\end{rem}

\begin{figure*}[t!]
    \centering
    \includegraphics[trim={1.2cm 4.5cm 2.2cm 4.5cm},clip,width=1.0\linewidth]{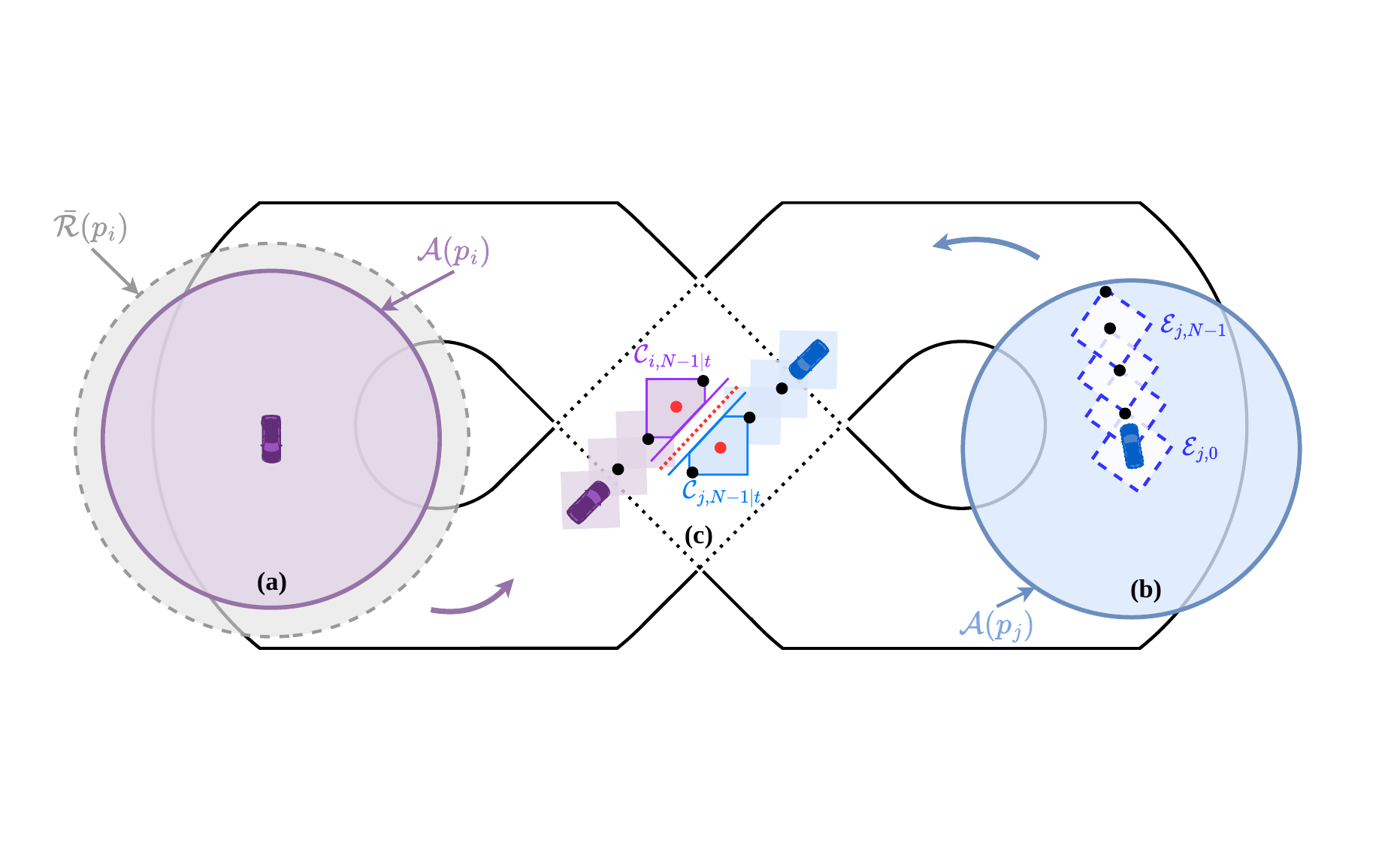}
    \caption{Consider two neighboring agents $\firstagent$ (purple car)
    and $\secondagent$ (blue car).
    \textbf{(a)} Visualization of the \textbf{communication set} $\bar{\mathcal{R}}(p_{\firstagent})$ (\filledcircle[gray, fill=gray!60]{3pt} gray circle)
    and \textbf{awareness set} $\mathcal{A}(p_{\firstagent})$ (\filledcircle[SetPurple, fill=SetPurple!70]{3pt} purple circle) for agent $\firstagent$. \textbf{(b)} Visualization of the \textbf{safety envelopes} $\mathcal{E}_{\secondagent, \step}, \forall \step \in \mathbb{N}_{0}^{N-1}$ (\filledsquare[fill=white,draw=ContractBlue]{1.5ex}blue dashed squares), 
    which constrain the one-step prediction, thereby resulting in the edge of the awareness set not being reached in less than $N$ steps. 
    \textbf{(c)} Visualization of the \textbf{cells} for agents $\firstagent$ and $\secondagent$ at stage $N-1$ along the horizon (\textcolor[HTML]{007FFF}{\rule[1pt]{8pt}{1pt}} blue and \textcolor[HTML]{9933FF}{\rule[1pt]{8pt}{1pt}} purple lines respectively) and the resulting \textbf{contracts} $\mathcal{C}_{\firstagent,N-1|t}$ and $\mathcal{C}_{\secondagent, N-1|t}$ (\filledsquare[fill=SetPurple!60,draw=ContractPurple]{1.5ex}purple and \filledsquare[fill=SetBlue!60,draw=ContractBlue]{1.5ex}blue cut-off squares).
    }
    \label{fig:constraint_overview}
    \vspace{-5mm}
 \end{figure*}
\section{Anytime Plug-and-Play MPC} \label{sec:method}

To solve Problem~\ref{pr:mainProb}, we propose a distributed receding-horizon strategy that can be solved locally by each agent using only neighbor-to-neighbor communication. While the cost~\eqref{centralized_mpc:cost} and the local state/input constraints~\eqref{centralized_mpc:state_constr}--\eqref{centralized_mpc:input_constr} are separable across agents, the collision-avoidance constraint~\eqref{centralized_mpc:collision_avoidance_constr} couples the optimization variables of different agents. Our goal is therefore to replace this coupled constraint by local \emph{contracts} that can be constructed from one exchange of predicted trajectories with the current neighbors and will be detailed in the following.

At each time $\distime$, the neighbor set $\mathcal{N}_{\firstagent}(\distime)$ is computed from the current positions and kept fixed while solving the FHOCP at time $\distime$. Based on this frozen neighbor set, agent $\firstagent$ builds a \emph{contract}
\begin{equation}\label{eq:contract}
    \mathcal{C}_{\firstagent,\distime}
    :=
    \big(\mathcal{P}_{\firstagent,\distime}^{N+1},\,\mathcal{E}_{\firstagent}^{N}\big),
\end{equation}
where $\mathcal{P}_{\firstagent,\distime}^{N+1}$ is a collection of time-varying \emph{cells} used to separate the current neighbors, and $\mathcal{E}_{\firstagent}^{N}$ is a collection of \emph{safety envelopes} used to preserve recursive feasibility and robustness with respect to topology changes. These two ingredients allow one to derive a distributed contract-based local FHOCP, inspired by the centralized reference problem~\eqref{centralized_mpc}, with only one exchange of predicted trajectories per time step. Figure~\ref{fig:constraint_overview} summarizes this construction, from the communication and awareness sets to the cells, safety envelopes, and resulting local contracts.

\subsection{Contract ingredients}

\subsubsection{Cells}
\begin{figure}
    \centering
    \includegraphics[width=1.0\linewidth]{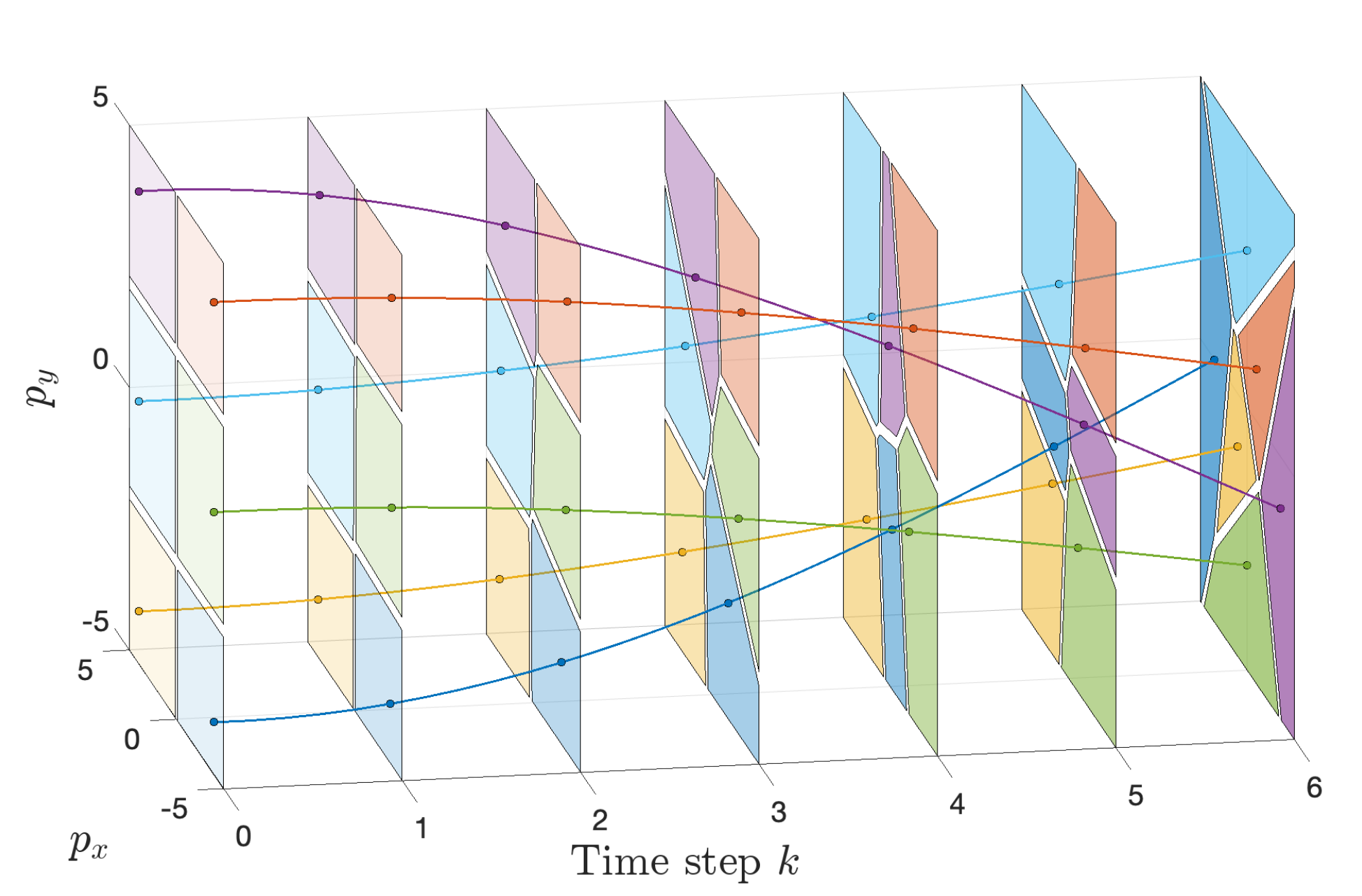}
    \caption{\textbf{Cells:} Visualization of partitioning, consisting of cells, of multiple agents over the horizon. Each agent is associated with a different color. The colored points indicate the position of an agent at a given step $k$ along the horizon. The respective hyperplanes are given as the shaded polytopes in the same color. The cells are constructed via a Voronoi-type partition; see Section~\ref{sec:application_to_racing}. Therefore, they change as the predicted positions of the agents change.}
    \label{fig:hyperplanes_as_partitions}
    \vspace{-5mm}
\end{figure}
Cells are stage-wise convex regions computed from the predicted trajectories exchanged with the \emph{current} neighbors. Their role is to ensure collision-free trajectories between neighbors.
Collision avoidance with agents that are not current neighbors will be handled through awareness sets in the next subsection. Figure~\ref{fig:hyperplanes_as_partitions} illustrates a possible time-varying cell partition along the prediction horizon.
\begin{definition}\label{def:partitions}
A \emph{cell} for agent $\firstagent$ at prediction step $\step$ and time $\distime$ is a convex subset of the admissible position space
\begin{align}
    \mathcal{P}_{\firstagent,\step|\distime}
    \subseteq
    \{ p_\firstagent=C_\firstagent x_\firstagent \;|\; x_\firstagent \in \mathcal{X}_\firstagent \}.  
    \label{eq:partition_state_subset}
\end{align}
The set of cells assigned to agent $\firstagent$ over a horizon of length $N$ is 
\begin{align}
    \mathcal{P}^{N+1}_{\firstagent,\distime}
    =
    \{\mathcal{P}_{\firstagent,0|\distime}, \dots, \mathcal{P}_{\firstagent,N|\distime}\}.
    \label{eq:set_of_partition}
\end{align}
A collection of cells $\{\mathcal{P}^{N+1}_{\firstagent,\distime}\}_{\firstagent\in\mathcal{M}}$ is said to be \emph{admissible} at time $\distime$ if, for all $\firstagent\in\mathcal{M}$, all $\secondagent\in\mathcal{N}_{\firstagent}(\distime)$, and all $\step\in\mathbb{N}_0^{N}$,
\begin{align}
    (\mathcal{P}_{\firstagent,\step|\distime}\oplus\mathcal{B}_{\epsilon/2})
    \;\cap\;
    (\mathcal{P}_{\secondagent,\step|\distime}\oplus\mathcal{B}_{\epsilon/2})
    = \emptyset.
    \label{eq:compatible_cells}
\end{align}
\end{definition}

Each local control problem then enforces the cell-membership constraint
\begin{align}
    p_{\firstagent,\step|\distime}\in \mathcal{P}_{\firstagent,\step|\distime},
    \qquad
    \forall \step\in\mathbb{N}_0^{N}.
    \label{eq:partition_constraint}
\end{align}

The next proposition states the basic role of cells: if neighboring agents remain inside cells, then they remain collision-free.

\begin{prop}\label{prop:partitions}
Suppose that, at time $\distime$, each agent $\firstagent\in\mathcal{M}$ is assigned a collection of cells $\mathcal{P}^{N+1}_{\firstagent,\distime}$ such that the family $\{\mathcal{P}^{N+1}_{\firstagent,\distime}\}_{\firstagent\in\mathcal{M}}$ is admissible in the sense of Definition~\ref{def:partitions}, and that the predicted positions satisfy~\eqref{eq:partition_constraint}. Then, for every $\firstagent\in\mathcal{M}$, every $\secondagent\in\mathcal{N}_{\firstagent}(\distime)$, and every $\step\in\mathbb{N}_0^{N}$,
\begin{align}
    h(x_{\firstagent,\step|\distime},x_{\secondagent,\step|\distime}) \ge 0.
\end{align}
\end{prop}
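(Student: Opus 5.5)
The argument is by contradiction, using the disjointness condition~\eqref{eq:compatible_cells} together with the explicit form~\eqref{eq:collision_avoidance} of $h$.

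Fix $\firstagent\in\mathcal{M}$, $\secondagent\in\mathcal{N}_{\firstagent}(\distime)$ and $\step\in\mathbb{N}_0^{N}$. By~\eqref{eq:partition_constraint}, applied to both agents, $p_{\firstagent,\step|\distime}\in\mathcal{P}_{\firstagent,\step|\distime}$ and $p_{\secondagent,\step|\distime}\in\mathcal{P}_{\secondagent,\step|\distime}$. Since $h(x_{\firstagent,\step|\distime},x_{\secondagent,\step|\distime}) = \|p_{\firstagent,\step|\distime}-p_{\secondagent,\step|\distime}\|-\epsilon$, it suffices to show $\|p_{\firstagent,\step|\distime}-p_{\secondagent,\step|\distime}\|\ge\epsilon$.

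Suppose instead that $\|p_{\firstagent,\step|\distime}-p_{\secondagent,\step|\distime}\|<\epsilon$, and take the midpoint $m:=\tfrac12(p_{\firstagent,\step|\distime}+p_{\secondagent,\step|\distime})$. Then
\[
\|m-p_{\firstagent,\step|\distime}\| = \|m-p_{\secondagent,\step|\distime}\| = \tfrac12\|p_{\firstagent,\step|\distime}-p_{\secondagent,\step|\distime}\| < \epsilon/2 .
\]
Because $\mathcal{B}_{\epsilon/2}$ is the open ball, $m-p_{\firstagent,\step|\distime}\in\mathcal{B}_{\epsilon/2}$, so $m\in\mathcal{P}_{\firstagent,\step|\distime}\oplus\mathcal{B}_{\epsilon/2}$. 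By the same reasoning $m\in\mathcal{P}_{\secondagent,\step|\distime}\oplus\mathcal{B}_{\epsilon/2}$. This contradicts admissibility~\eqref{eq:compatible_cells}, which requires these two sets to be disjoint. Hence $\|p_{\firstagent,\step|\distime}-p_{\secondagent,\step|\distime}\|\ge\epsilon$, i.e.\ $h\ge 0$.

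The only subtle point is the strict versus non-strict inequality at the boundary. Because the ball is open, the contradiction argument applies exactly when the distance is strictly less than $\epsilon$. This is precisely what is needed to conclude the non-strict inequality $h\ge0$, so no closure or compactness argument is required. Convexity of the cells is not used.
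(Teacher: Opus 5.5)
Your proof is correct and follows the same route as the paper's appendix proof: combine the cell-membership constraint~\eqref{eq:partition_constraint} with the disjointness~\eqref{eq:compatible_cells} of the inflated cells to get $\|p_{\firstagent,\step|\distime}-p_{\secondagent,\step|\distime}\|\ge\epsilon$, then invoke~\eqref{eq:collision_avoidance}. The paper only asserts this distance bound, whereas your midpoint argument, which uses that $\mathcal{B}_{\epsilon/2}$ is open, actually justifies it.
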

\noindent The proof is given in the Appendix~\ref{App:proof_partitions}.

The following converse result shows that, whenever collision-free trajectories are given, there exists an admissible collection of cells containing them.

\begin{lemma}\label{lem:cell_existence}
Consider a time instant $\distime$ and predicted positions
$p_{\firstagent,\step|\distime}\in \{C_\firstagent x_\firstagent \mid x_\firstagent\in\mathcal X_\firstagent\}$,
for all $\firstagent\in\mathcal M$ and $\step\in\mathbb N_0^{N}$.
Assume that, for every $\firstagent\in\mathcal M$, every $\secondagent\in\mathcal N_{\firstagent}(\distime)$, and every $\step\in\mathbb N_0^{N}$,
\begin{align}
    h(x_{\firstagent,\step|\distime},x_{\secondagent,\step|\distime}) \ge 0.
\end{align}
Then there exists a family of cells
$\{\mathcal P^{N+1}_{\firstagent,\distime}\}_{\firstagent\in\mathcal M}$
admissible in the sense of Definition~\ref{def:partitions} such that
\begin{align}
    p_{\firstagent,\step|\distime}\in \mathcal P_{\firstagent,\step|\distime},
    \qquad
    \forall \firstagent\in\mathcal M,\ \forall \step\in\mathbb N_0^{N}.
\end{align}
\end{lemma}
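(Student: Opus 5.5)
The proof will be constructive, using the simplest admissible choice: \emph{singleton cells}. For every $\firstagent\in\mathcal M$ and $\step\in\mathbb N_0^{N}$ I will set
\begin{align*}
    \mathcal P_{\firstagent,\step|\distime} := \{p_{\firstagent,\step|\distime}\}.
\end{align*}
Then I will check the three requirements in turn: the set properties of Definition~\ref{def:partitions}, the cell-membership condition, and the separation condition~\eqref{eq:compatible_cells}.

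First, a singleton is convex. By hypothesis $p_{\firstagent,\step|\distime}\in\{C_\firstagent x_\firstagent \mid x_\firstagent\in\mathcal X_\firstagent\}$, so~\eqref{eq:partition_state_subset} holds. Membership $p_{\firstagent,\step|\distime}\in\mathcal P_{\firstagent,\step|\distime}$ is immediate by construction.

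Second, I will verify admissibility. Fix $\firstagent\in\mathcal M$, $\secondagent\in\mathcal N_\firstagent(\distime)$ and $\step\in\mathbb N_0^N$. With singleton cells, $\mathcal P_{\firstagent,\step|\distime}\oplus\mathcal B_{\epsilon/2}=\mathcal B_{\epsilon/2}(p_{\firstagent,\step|\distime})$, and the analogous identity holds for $\secondagent$. By~\eqref{eq:collision_avoidance}, the hypothesis $h\ge 0$ means $\|p_{\firstagent,\step|\distime}-p_{\secondagent,\step|\distime}\|\ge\epsilon$. Suppose, for contradiction, that some $\xi$ lies in both open balls. The triangle inequality then gives
\begin{align*}
    \|p_{\firstagent,\step|\distime}-p_{\secondagent,\step|\distime}\|
    \le \|p_{\firstagent,\step|\distime}-\xi\|+\|\xi-p_{\secondagent,\step|\distime}\| < \tfrac{\epsilon}{2}+\tfrac{\epsilon}{2}=\epsilon,
\end{align*}
which contradicts the assumption. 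Hence the intersection in~\eqref{eq:compatible_cells} is empty.

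The only delicate point is the boundary case $\|p_{\firstagent,\step|\distime}-p_{\secondagent,\step|\distime}\|=\epsilon$. It is handled precisely because $\mathcal B_{\epsilon/2}$ is defined in the Notation as an \emph{open} ball, so the strict inequality above is available. The neighbor relation~\eqref{eq:neighbor_set} is symmetric, so the hypothesis covers every ordered pair required by the definition, and no further consistency issue arises.

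As a remark, one could instead use larger, Voronoi-type cells built from separating hyperplanes. These are more useful in practice, since they give the optimizer freedom within each cell. Existence, however, only requires the singleton construction.
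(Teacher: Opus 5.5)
Your proposal is correct and follows the paper's proof essentially verbatim. The paper also uses singleton cells $\mathcal P_{\firstagent,\step|\distime}=\{p_{\firstagent,\step|\distime}\}$, notes that they are convex and lie in the admissible position space, and gets disjointness of the inflated cells from $\|p_{\firstagent,\step|\distime}-p_{\secondagent,\step|\distime}\|\ge\epsilon$ together with the openness of $\mathcal B_{\epsilon/2}$; your triangle-inequality argument just spells out the step the paper states in one line.
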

\noindent The proof can be found in Appendix~\ref{App:proof_lem_partitions}.

\subsubsection{Awareness sets and safety envelopes}

Cells are sufficient to separate the current neighbors, but by themselves they do not address topology changes. To this end, we introduce an auxiliary set that captures the region in which an agent may become aware of other agents.

\begin{definition}\label{definition:safe_set}
The \emph{awareness set} of agent $\firstagent$ is
\begin{align}
    \mathcal{A}(p_{\firstagent})
    :=
    \bar{\mathcal{R}}(p_{\firstagent}) \ominus \mathcal{B}_{\epsilon/2},
\end{align}
where $\mathcal{B}_{\epsilon/2}$ accounts for the physical size of the agent and is centered at the origin.
\end{definition}

The awareness set is a tightened version of the communication set~$\bar{\mathcal{R}}(p_{\firstagent})$ that accounts for the physical size of the agent. Since it is obtained by subtracting the body shape centered at the origin from the translated communication set, it remains centered at the current position of the agent. The awareness set represents the region around agent $i$ in which another agent can be detected sufficiently early to become a neighbor before a collision can occur. Since $A(p_i)$ is a tightened version of the communication set, any agent inside $A(p_i)$ is necessarily within the communication range of agent $i$.
We next describe a pairwise safety property for agents that are not neighbors at the current time.

\begin{prop}\label{prop:non_neighbor_safe_set}
Consider two distinct agents $\firstagent,\secondagent\in\mathcal{M}$ and a time instant $\distime$ such that $\secondagent \notin \mathcal{N}_{\firstagent}(\distime)$.
If the predicted trajectories satisfy
\begin{align}
    p_{\firstagent,\step|\distime}\in \mathcal{A}(p_{\firstagent,0|\distime}),
    \quad
    p_{\secondagent,\step|\distime}\in \mathcal{A}(p_{\secondagent,0|\distime}),
    \quad
    \forall \step\in\mathbb{N}_0^{N},
    \label{eq:pairwise_safe_set_constraint}
\end{align}
then $h(x_{\firstagent,\step|\distime},x_{\secondagent,\step|\distime}) \ge 0,$ $\forall \step\in\mathbb{N}_0^{N}.$
\end{prop}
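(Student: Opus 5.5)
The argument is by contradiction and uses only set containments; dynamics play no role. Assume that for some $\step\in\mathbb{N}_0^N$ we have $h(x_{\firstagent,\step|\distime},x_{\secondagent,\step|\distime})<0$, i.e.\ $\|p_{\firstagent,\step|\distime}-p_{\secondagent,\step|\distime}\|<\epsilon$. Equivalently, the open balls $p_{\firstagent,\step|\distime}\oplus\mathcal{B}_{\epsilon/2}$ and $p_{\secondagent,\step|\distime}\oplus\mathcal{B}_{\epsilon/2}$ intersect. Concretely, the midpoint $m:=\tfrac12(p_{\firstagent,\step|\distime}+p_{\secondagent,\step|\distime})$ is at distance less than $\epsilon/2$ from both positions.

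The key step is the Pontryagin-difference identity $\mathcal{A}(c)\oplus\mathcal{B}_{\epsilon/2}\subseteq\bar{\mathcal{R}}(c)$. It holds because $(\mathcal{X}\ominus\mathcal{Y})\oplus\mathcal{Y}\subseteq\mathcal{X}$ for any sets, by the definition of $\ominus$ given in the Notation section.

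Since $p_{\firstagent,\step|\distime}\in\mathcal{A}(p_{\firstagent,0|\distime})$ and $m-p_{\firstagent,\step|\distime}\in\mathcal{B}_{\epsilon/2}$, we get $m\in\bar{\mathcal{R}}(p_{\firstagent,0|\distime})$. Symmetrically, $m\in\bar{\mathcal{R}}(p_{\secondagent,0|\distime})$. Using the initial condition $x_{\firstagent,0|\distime}=x_\firstagent(\distime)$, so that $p_{\firstagent,0|\distime}=p_\firstagent(\distime)$ and likewise for $\secondagent$, this gives $\bar{\mathcal{R}}(p_\firstagent(\distime))\cap\bar{\mathcal{R}}(p_\secondagent(\distime))\neq\emptyset$. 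By \eqref{eq:neighbor_set}, $\secondagent\in\mathcal{N}_\firstagent(\distime)$, which contradicts the hypothesis.

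The only point needing care is the identification $p_{\firstagent,0|\distime}=p_\firstagent(\distime)$, which ties the awareness sets in \eqref{eq:pairwise_safe_set_constraint} to the neighbor definition. This is immediate from the initial-state constraint of the FHOCP. If predicted trajectories were not anchored at the measured state, the statement should instead be read with the neighbor set defined at $p_{\cdot,0|\distime}$.

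One might worry about open versus closed balls, but strictness causes no problem. The contradiction hypothesis gives strict distance $<\epsilon$, so $m$ lies in the open ball, and that is exactly what the definition of $\ominus$ with the open $\mathcal{B}_{\epsilon/2}$ requires. Hence no further obstacle is expected, and the proof is essentially two set inclusions.
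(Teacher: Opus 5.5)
Your proposal is correct and follows the paper's argument: both rely on the inclusion $\mathcal{A}(c)\oplus\mathcal{B}_{\epsilon/2}\subseteq\bar{\mathcal{R}}(c)$ and on the disjointness $\bar{\mathcal{R}}(p_{i,0|t})\cap\bar{\mathcal{R}}(p_{j,0|t})=\emptyset$ for non-neighbors, and the paper makes the same implicit identification $p_{i,0|t}=p_i(t)$ that you flag. Your midpoint contradiction simply spells out the paper's one-line step from disjoint open $\epsilon/2$-balls to $\|p_{i,k|t}-p_{j,k|t}\|\ge\epsilon$.
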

\noindent The proof is given in the Appendix~\ref{App:proof_cells1}.

While~\eqref{eq:pairwise_safe_set_constraint} guarantees pairwise collision avoidance at the current time for non-neighboring agents, it is in general not preserved after shifting the solution by one time step. The reason is that the awareness set is centered at the current position and therefore shifts from one time step to the next. As a consequence, the tail of a previously feasible trajectory may leave the shifted awareness set. This issue is illustrated in Figure~\ref{fig:safe_set_loss_rec_feas}.

\begin{figure}[t]
    \centering       \includegraphics[width=0.75\linewidth, trim={1.2cm 0.5cm 1.2cm 0.2cm},clip]{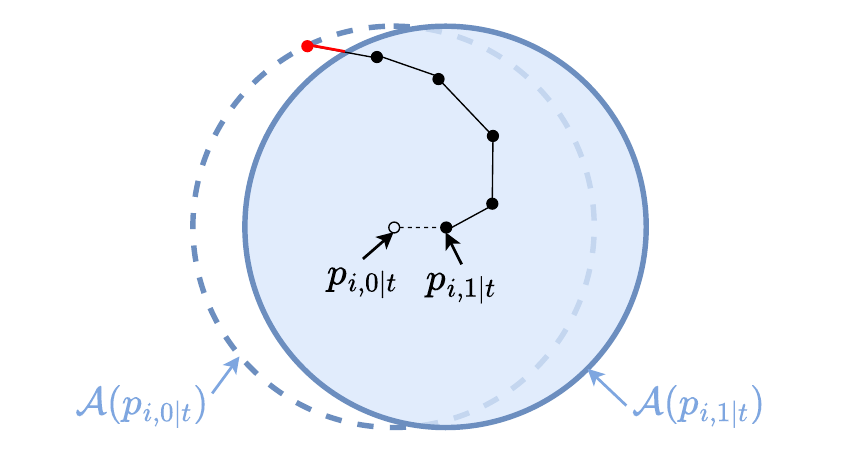}
  \caption{Visualization of the loss of feasibility when the shifted solution is used as a candidate trajectory. The current position of the agent at time $\distime$ is shown by the black circle $p_{\firstagent,0|\distime}$. The planned trajectory is shown in black, and the awareness set $\mathcal{A}(p_{\firstagent,0|\distime})$ at time $\distime$ is shown as the blue dashed circle. At the next time step $\distimenext$, the solution is shifted, and the awareness set is centered at the shifted current position $p_{\firstagent,1|\distime}$. The tail of the shifted trajectory is no longer contained in the shifted awareness set (red) and is therefore not feasible.
}
  \label{fig:safe_set_loss_rec_feas} 
\vspace{-3mm}
\end{figure}

To ensure that feasibility is preserved after shifting the solution by one time step, we do not constrain the whole trajectory to remain in a fixed awareness set. Instead, we constrain the successive increments of the predicted positions using \textit{safety envelopes}, which are scaled-down copies of the awareness set. Their cumulative effect guarantees that the predicted trajectory remains inside the awareness set.

\begin{figure*}[t!]
    \centering
    \includegraphics[trim={0.5cm 0.2cm 1cm 0.2cm},clip,width=1.0\linewidth]{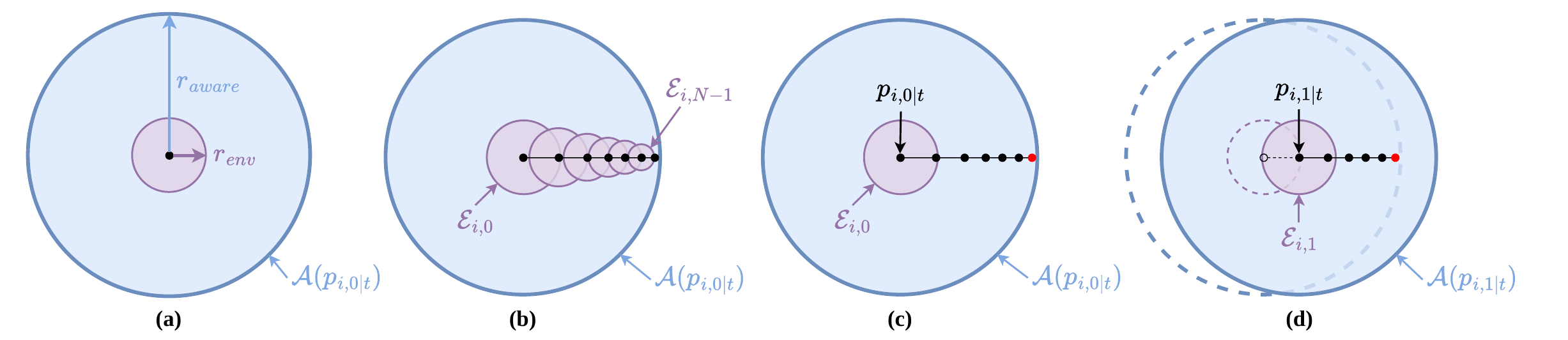}
    \caption{Example of a set of \textbf{safety envelopes} that fulfill Definition~\ref{def:safety_envelope}. \textbf{(a)} The safety envelopes (\filledcircle[SetPurple, fill=SetPurple!60]{3pt} smaller purple circles)
    are a scaled-down version of the awareness set $\mathcal{A}(p_{\firstagent,0| \distime})$ (\filledcircle[SetBlue, fill=SetBlue!60]{3pt} large blue circle)
    , $r_{\textit{env}} = \alpha_{\step} \cdot r_{\textit{aware}}$, and \textbf{(b)} the scaling factors sum up to 1. The safety envelopes ($\mathcal{E}_{\firstagent,\step}, \forall \step \in \mathbb{N}_0^{N-1}$) ensure the trajectory $\tau_{\firstagent, \distime}^{0:N}$ (\filledcircle[black, fill=black]{1pt} sequence of black points) does not reach the edge of the awareness set $\mathcal{A}(p_{\firstagent, 0|\distime})$ in less than $N$ steps. \textbf{(c)} Visualization of how the \textit{one-step} constraint results in feasible shifted solution. The current position of the agent at time $\distime$ is shown as the black point indicated by $p_{\firstagent, 0|\distime}$. The planned trajectory is given in black and the awareness set at time $\distime$ as the blue shaded area. The safety envelope for the first step is shown in purple. \textbf{(d)} At the next time step $\distimenext$, the solution is shifted, and the awareness set is now centered around the predicted next step (black point $p_{\firstagent, 1|\distime}$). The shifted trajectory (black) is still contained in the awareness set and, therefore, a feasible solution.
    }
    \label{fig:safety_envelope_overview}
    \vspace{-5mm}
 \end{figure*}

\begin{definition}\label{def:safety_envelope}
A \emph{safety envelope} for agent $\firstagent$ is a sequence of convex sets 
\[
\mathcal{E}_{\firstagent}^{N}
=
\{\mathcal{E}_{\firstagent,0}, \dots, \mathcal{E}_{\firstagent,N-1}\},
\] 
where each set is defined as
\begin{align}
    \mathcal{E}_{\firstagent,\step}
    =
    \alpha_{\firstagent,\step}\,\mathcal{A}(\mathbf{0}_3),
    \quad
    \alpha_{\firstagent,\step}\geq 0,
    \quad
    \forall \step\in\mathbb{N}_{0}^{N-1}.
\end{align}
The coefficients $\alpha_{\firstagent,\step}$ are chosen such that
\begin{align}
    \alpha_{\firstagent,\step}\geq \alpha_{\firstagent,\step+1},
    \qquad
    \sum_{\step=0}^{N-1}\alpha_{\firstagent,\step}\leq 1.
    \label{eq:alpha_decrease}
\end{align}
Consequently,
\begin{align}
    \bigoplus_{\step=0}^{N-1}\mathcal{E}_{\firstagent,\step}
    \subseteq
    \mathcal{A}(\mathbf{0}_3).
    \label{eq:sum_of_safety_envelopes}
\end{align}
\end{definition}
\noindent
The safety-envelope constraint is then imposed as
\begin{align}
    p_{\firstagent,\stepnext|\distime} - p_{\firstagent,\step|\distime}
    \in \mathcal{E}_{\firstagent,\step},
    \qquad
    \forall \step\in\mathbb{N}_0^{N-1}.
    \label{eq:constr_safety_envelope}
\end{align}
The next proposition states the key implication of the safety-envelope constraint: if the successive position increments satisfy~\eqref{eq:constr_safety_envelope}, then the whole predicted trajectory, and every shifted tail of it, remains inside the corresponding awareness set.
\begin{prop}\label{prop:safety_envelopes}
Consider an agent $\firstagent$ and its predicted position trajectory
\[
\tau_{\firstagent,\distime}^{0:N}
=
\{p_{\firstagent,0|\distime},\dots,p_{\firstagent,N|\distime}\}.
\]
If~\eqref{eq:constr_safety_envelope} holds, then the trajectory is contained in the awareness set centered at its initial point, i.e.,
\begin{align}
    \tau_{\firstagent,\distime}^{0:N}
    \subseteq
    \mathcal{A}(p_{\firstagent,0|\distime}),
    \label{eq:traj_in_safe}
\end{align}
and, more generally, for every $\ell\in\mathbb{N}_0^{N-1}$,
\begin{align}
    \tau_{\firstagent,\distime}^{\ell:N}
    :=
    \{p_{\firstagent,\ell|\distime},\dots,p_{\firstagent,N|\distime}\}
    \subseteq
    \mathcal{A}(p_{\firstagent,\ell|\distime}).
    \label{eq:shift_traj_in_safe}
\end{align}
\end{prop}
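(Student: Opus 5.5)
The plan is a telescoping-sum argument combined with the convexity and scaling properties of the awareness set $\mathcal{A}(\mathbf{0}_3)$. First I would record the basic structural facts. $\bar{\mathcal{R}}(\mathbf{0}_3)$ is a centrally symmetric polytope, hence convex. The Pontryagin difference of a convex set by a ball is convex, so $\mathcal{A}(\mathbf{0}_3)$ is convex. It contains the origin: $\mathcal{B}_{\epsilon/2}\subseteq\bar{\mathcal{R}}(\mathbf{0}_3)$ gives $\mathbf{0}\in\bar{\mathcal{R}}(\mathbf{0}_3)\ominus\mathcal{B}_{\epsilon/2}$. Finally, $\mathcal{A}(p)=p+\mathcal{A}(\mathbf{0}_3)$, since translating the communication set commutes with subtracting a ball centered at the origin.

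The key lemma is that, for a convex set $S$ containing the origin and coefficients $\alpha_k\ge0$ with $\sum_k\alpha_k\le 1$,
\[
\bigoplus_k \alpha_k S\subseteq \Big(\sum_k \alpha_k\Big) S\subseteq S.
\]
The first inclusion follows from convexity: for $s_k\in S$ and $\sigma=\sum_k\alpha_k>0$, the point $\sum_k\alpha_k s_k$ equals $\sigma$ times a convex combination of the $s_k$. The second inclusion holds because $\mathbf{0}\in S$, so $\lambda S\subseteq S$ for $\lambda\in[0,1]$. The case $\sigma=0$ is trivial since then every term is $\{\mathbf{0}\}$. This lemma justifies \eqref{eq:sum_of_safety_envelopes}. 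More importantly, it applies to every sub-collection $\{\mathcal{E}_{\firstagent,\step}\}_{\step\in\mathbb{N}_\ell^{m-1}}$, because any partial sum of the $\alpha_{\firstagent,\step}$ is also at most $1$.

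Next I fix $\ell\in\mathbb{N}_0^{N-1}$ and $m\in\mathbb{N}_\ell^N$, and write
\[
p_{\firstagent,m|\distime}-p_{\firstagent,\ell|\distime}=\sum_{\step=\ell}^{m-1}\big(p_{\firstagent,\step+1|\distime}-p_{\firstagent,\step|\distime}\big)\in\bigoplus_{\step=\ell}^{m-1}\mathcal{E}_{\firstagent,\step}\subseteq\mathcal{A}(\mathbf{0}_3),
\]
using \eqref{eq:constr_safety_envelope} termwise and then the lemma. For $m=\ell$ the difference is $\mathbf{0}\in\mathcal{A}(\mathbf{0}_3)$. By translation invariance this gives $p_{\firstagent,m|\distime}\in\mathcal{A}(p_{\firstagent,\ell|\distime})$, which is \eqref{eq:shift_traj_in_safe}. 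Taking $\ell=0$ gives \eqref{eq:traj_in_safe}.

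The monotonicity condition $\alpha_{\firstagent,\step}\ge\alpha_{\firstagent,\step+1}$ is not needed here. I expect it matters only later, for shifting the envelope sequence in the recursive-feasibility argument.

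The only delicate step is the Minkowski-sum inclusion, which rests on convexity of $\mathcal{A}(\mathbf{0}_3)$ and on $\mathbf{0}\in\mathcal{A}(\mathbf{0}_3)$. Both follow from the standing assumptions on $\bar{\mathcal{R}}$ and the encumbrance ball, but I would state them explicitly. The remaining steps are routine bookkeeping.
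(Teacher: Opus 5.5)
Your proposal is correct and follows essentially the same route as the paper: a telescoping sum of the increments, inclusion of the resulting Minkowski sum in $\mathcal{A}(\mathbf{0}_3)$, and translation to $\mathcal{A}(p_{\firstagent,\ell|\distime})$. The only difference is that you prove the inclusion \eqref{eq:sum_of_safety_envelopes} from convexity and $\mathbf{0}\in\mathcal{A}(\mathbf{0}_3)$ and apply it directly to the partial sums. The paper instead takes that inclusion from Definition~\ref{def:safety_envelope} and embeds the partial sum into the full one, which tacitly uses $\mathbf{0}\in\mathcal{E}_{\firstagent,\step}$.
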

\noindent The proof can be found in Appendix~\ref{App:proof_cells2}.

Figure~\ref{fig:safety_envelope_overview} illustrates how~\eqref{eq:constr_safety_envelope} ensures that the trajectory $\tau_{\firstagent,\distime}^{0:N}$ remains inside the awareness set while preserving a feasible shifted solution.
\\
The final FHOCP formulation as well as a theoretical analysis are provided in the following section.

\subsection{Local formulation and theoretical analysis} \label{sec:analysis}

Using the local contract~\eqref{eq:contract}, we derive one local contract-based FHOCP per agent, inspired by the centralized reference problem~\eqref{centralized_mpc}. For a fixed agent $\firstagent$, solve
\begin{subequations}\label{distributed_mpc_final}
\begin{alignat}{2}
   &\min_{\substack{x_{\firstagent,0|\distime},\dots,x_{\firstagent,N|\distime},\\ \,u_{\firstagent,0|\distime},\dots,u_{\firstagent,N-1|\distime},\\ \,\bar x_{\firstagent|\distime}, \bar u_{\firstagent|\distime}}}
   && \sum_{\step=0}^{N-1}
   J_{\firstagent}(x_{\firstagent,\step|\distime},u_{\firstagent,\step|\distime})
   \label{distributed_mpc_final:cost} \\
   &\text{subject to} 
   && x_{\firstagent,0|\distime} = x_{\firstagent}(\distime)
   \label{distributed_mpc_final:init_constr},\\
   &&
   & x_{\firstagent,\stepnext|\distime}
   = f_\firstagent(x_{\firstagent,\step|\distime},u_{\firstagent,\step|\distime})
   \label{distributed_mpc_final:dynamics_constr}, \\
   &&
   & p_{\firstagent,\step|\distime}
    =
    C_{\firstagent} x_{\firstagent,\step|\distime},
   \label{distributed_mpc_final:position_def} \\
   &&
   & (x_{\firstagent,\step|\distime}, u_{\firstagent,\step|\distime})\in\mathcal{X}_{\firstagent} \times \mathcal{U}_{\firstagent}
   \label{distributed_mpc_final:stateinput_constr}, \\
   &&
   & p_{\firstagent,\step|\distime}\in\mathcal{P}_{\firstagent,\step|\distime}
   \label{distributed_mpc_final:hyperplane}, \\
   &&
   & \Delta p_{\firstagent,\step|\distime}\in\mathcal{E}_{\firstagent,\step}
   \label{distributed_mpc_final:safety_envelope}, \\
   &&
   & x_{\firstagent,N|\distime} = \bar{x}_{\firstagent|\distime}
   \label{distributed_mpc_final:terminal},  \\
   &&
   & \bar{x}_{\firstagent|\distime}
   =
   f_\firstagent(\bar{x}_{\firstagent|\distime},\bar{u}_{\firstagent|\distime})
   \label{distributed_mpc_final:terminal_eq},  \\
   &&
   & (\bar{x}_{\firstagent|\distime},\bar{u}_{\firstagent|\distime})
   \in
   \mathcal{X}_{\firstagent}\times\mathcal{U}_{\firstagent}
   \label{distributed_mpc_final:terminal_adm}
\end{alignat}
\end{subequations}
The constraints~\eqref{distributed_mpc_final:dynamics_constr},
\eqref{distributed_mpc_final:stateinput_constr}, and
\eqref{distributed_mpc_final:safety_envelope}
are imposed for all $\step\in\mathbb{N}_0^{N-1}$, while
\eqref{distributed_mpc_final:position_def} and
\eqref{distributed_mpc_final:hyperplane}
are imposed for all $\step\in\mathbb{N}_0^{N}$.
Moreover,
$\Delta p_{\firstagent,\step|\distime}
:=
p_{\firstagent,\stepnext|\distime}-p_{\firstagent,\step|\distime}$,
for all $\step\in\mathbb{N}_0^{N-1}$.

The terminal equality constraint~\eqref{distributed_mpc_final:terminal} is imposed to an admissible equilibrium selected online by the optimizer. Because of Assumption~\ref{ass:posInv}, this equilibrium is not tied to a fixed spatial location: the optimizer may select any admissible equilibrium compatible with the local constraints, the terminal cell-membership constraint already contained in~\eqref{distributed_mpc_final:hyperplane} for $\step=N$, and the terminal increment constraint contained in~\eqref{distributed_mpc_final:safety_envelope} for $\step=N-1$.

\begin{rem}\label{rem:terminal_set}
We adopt a terminal equality constraint because it yields a simple and transparent recursive-feasibility argument in the anytime plug-and-play setting. A more general formulation would replace~\eqref{distributed_mpc_final:terminal} by a time-varying terminal set and a local terminal controller. This would allow more flexibility, but such a terminal set would need to remain compatible with the time-varying cells, the safety envelopes, and the changing neighbor sets, making its computation significantly more involved. We leave this extension to future work.
\end{rem}

\begin{theorem}\label{thm:recursive_feasibility}
Assume that problem~\eqref{distributed_mpc_final} is feasible at time $\distime=0$, and that at each time $\distime$ the cells $\mathcal{P}_{\firstagent,\distime}^{N+1}$ satisfy Definition~\ref{def:partitions}, while the safety envelopes $\mathcal{E}_{\firstagent}^{N}$ satisfy Definition~\ref{def:safety_envelope}. Then problem~\eqref{distributed_mpc_final} is recursively feasible. Moreover, the resulting closed-loop system satisfies the local state and input constraints and the collision-avoidance constraint~\eqref{eq:collision_avoidance} for all $\distime\geq 0$, even under anytime plug-and-play changes of the network topology.
\end{theorem}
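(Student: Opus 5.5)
The plan is the standard shifted-candidate argument for terminal-equality MPC, adapted to the time-varying cells and the safety envelopes. Let $(x^*_{\firstagent,\step|\distime},u^*_{\firstagent,\step|\distime},\bar x^*_{\firstagent|\distime},\bar u^*_{\firstagent|\distime})$ be an optimal, or merely feasible, solution of~\eqref{distributed_mpc_final} at time $\distime$ for every agent. At time $\distimenext$ we take the candidate
\[
\tilde u_{\firstagent,\step|\distimenext}=u^*_{\firstagent,\step+1|\distime}\ (\step\in\mathbb{N}_0^{N-2}),\qquad \tilde u_{\firstagent,N-1|\distimenext}=\bar u^*_{\firstagent|\distime},
\]
and keep the same terminal equilibrium. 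The resulting states are $\tilde x_{\firstagent,\step|\distimenext}=x^*_{\firstagent,\step+1|\distime}$ for $\step\le N-1$ and $\tilde x_{\firstagent,N|\distimenext}=\bar x^*_{\firstagent|\distime}$.

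We check the constraints of~\eqref{distributed_mpc_final} one by one.
\begin{itemize}
\item \emph{Initial condition and dynamics.} Since $x_{\firstagent}(\distimenext)=x^*_{\firstagent,1|\distime}$, the initial-state and dynamics constraints hold.
\item \emph{Terminal constraints.} These follow from~\eqref{distributed_mpc_final:terminal_eq} and~\eqref{distributed_mpc_final:terminal_adm}.
\item \emph{State and input constraints.} These are inherited from the previous solution and from the admissibility of $(\bar x,\bar u)$.
\item \emph{Safety envelopes.} For $\step\le N-2$ the candidate increment is $\Delta p^*_{\firstagent,\step+1|\distime}\in\mathcal{E}_{\firstagent,\step+1}=\alpha_{\firstagent,\step+1}\mathcal{A}(\mathbf{0}_3)\subseteq\alpha_{\firstagent,\step}\mathcal{A}(\mathbf{0}_3)$. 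This inclusion holds because $\mathcal{A}(\mathbf{0}_3)$ is convex, contains the origin, and $\alpha_{\firstagent,\step}\ge\alpha_{\firstagent,\step+1}$ by~\eqref{eq:alpha_decrease}. The last increment is $\mathbf{0}$, since the agent sits at the equilibrium. We have $\mathbf{0}\in\mathcal{E}_{\firstagent,N-1}$ because $\mathcal{B}_{\epsilon/2}\subseteq\bar{\mathcal{R}}(\mathbf{0}_3)$ and central symmetry give $\mathbf{0}\in\mathcal{A}(\mathbf{0}_3)$.
\end{itemize}

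The remaining and main obstacle is the cell constraint~\eqref{distributed_mpc_final:hyperplane}. The cells at $\distimenext$ are rebuilt from the exchanged predictions, and the neighbor set may have changed arbitrarily. We read the hypothesis ``the cells satisfy Definition~\ref{def:partitions}'' as: the cells are admissible and are built around the exchanged shifted candidates. By Lemma~\ref{lem:cell_existence}, such cells exist as soon as the shifted candidates are pairwise collision-free for every pair $\secondagent\in\mathcal{N}_{\firstagent}(\distimenext)$. We prove this by a case split on the status of the pair at time $\distime$.
\begin{itemize}
\item \emph{Case $\secondagent\in\mathcal{N}_{\firstagent}(\distime)$.} For $\step\le N-1$ the candidates are the time-$\distime$ predictions at steps $\step+1\le N$, which are collision-free by Proposition~\ref{prop:partitions}. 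For the appended step $N$ the positions coincide with those at step $N$ of the previous prediction, because the terminal increment was zero, so they are again collision-free.
\item \emph{Case $\secondagent\notin\mathcal{N}_{\firstagent}(\distime)$ (newly plugged-in neighbor).} Proposition~\ref{prop:safety_envelopes} gives $\tau^{0:N}_{\firstagent,\distime}\subseteq\mathcal{A}(p_{\firstagent,0|\distime})$, and similarly for $\secondagent$. Proposition~\ref{prop:non_neighbor_safe_set} then yields collision avoidance at all steps $1,\dots,N$ of the old predictions, hence for the whole shifted candidate.
\end{itemize}
The point requiring care is that the awareness-set argument needs no communication at time $\distime$. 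This is exactly what makes uncoordinated joining harmless. Agents leaving the network simply drop a constraint, so they cannot affect feasibility.

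Induction on $\distime$ then gives recursive feasibility. Closed-loop constraint satisfaction follows from the constraints at $\step=0$. For collision avoidance at each $\distime$ there are two cases. For current neighbors, apply Proposition~\ref{prop:partitions} at $\step=0$. For non-neighbors, apply Proposition~\ref{prop:non_neighbor_safe_set} at $\step=0$; alternatively, argue directly that disjoint sets $\bar{\mathcal{R}}$ each contain the agent's body ball, so $\|p_{\firstagent}-p_{\secondagent}\|\ge\epsilon$.
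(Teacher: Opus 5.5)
Your proposal is correct and follows essentially the same route as the paper: the shifted candidate with the terminal equilibrium appended, the nested envelopes $\mathcal{E}_{\firstagent,\step+1}\subseteq\mathcal{E}_{\firstagent,\step}$ together with a zero final increment, and a case split on retained versus newly appearing neighbors (Proposition~\ref{prop:partitions}, respectively Propositions~\ref{prop:safety_envelopes} and~\ref{prop:non_neighbor_safe_set}) that feeds into Lemma~\ref{lem:cell_existence}. You explicitly read the hypothesis as requiring the cells at $\distimenext$ to be admissible \emph{and} to contain the shifted candidates; the paper relies on this same assumption silently when it passes from the mere existence statement of Lemma~\ref{lem:cell_existence} to feasibility of the candidate, so stating it is a strength of your proposal rather than a deviation.
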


\begin{proof}
We prove recursive feasibility by constructing, from any feasible optimal solution of~\eqref{distributed_mpc_final} at time $\distime$, a feasible candidate solution at time $\distimenext$. Let 
\begin{align*}
X_{\firstagent|\distime}^{*}
&=
\{x_{\firstagent,0|\distime}^{*},\dots,x_{\firstagent,N|\distime}^{*}\}, \\
U_{\firstagent|\distime}^{*}
&=
\{u_{\firstagent,0|\distime}^{*},\dots,u_{\firstagent,N-1|\distime}^{*}\},
\end{align*}
be the optimal state and input sequences at time $\distime$, and let
$(\bar x_{\firstagent|\distime}^{*},\bar u_{\firstagent|\distime}^{*})$
be the corresponding terminal equilibrium. Define the candidate solution at time $\distimenext$ as
\begin{align*}
    \hat x_{\firstagent,\step|\distimenext}
    &:=
    x_{\firstagent,\step+1|\distime}^{*},
    \quad
    \step\in\mathbb{N}_0^{N-1}, \quad
    \hat x_{\firstagent,N|\distimenext}
    :=
    \bar x_{\firstagent|\distime}^{*},\\
    \hat u_{\firstagent,\step|\distimenext}
    &:=
    u_{\firstagent,\step+1|\distime}^{*},
    \quad
    \step\in\mathbb{N}_0^{N-2}, \quad
    \hat u_{\firstagent,N-1|\distimenext}
    :=
    \bar u_{\firstagent|\distime}^{*},\\
    \hat{\bar x}_{\firstagent|\distimenext}
    &:=
    \bar x_{\firstagent|\distime}^{*},
    \qquad
    \hat{\bar u}_{\firstagent|\distimenext}
    :=
    \bar u_{\firstagent|\distime}^{*}.
\end{align*}
The initial condition is satisfied because
\[
\hat x_{\firstagent,0|\distimenext}
=
x_{\firstagent,1|\distime}^{*}
=
x_{\firstagent}(\distimenext),
\]
and the dynamics hold by construction together with~\eqref{distributed_mpc_final:terminal_eq}. Since the solution at time $\distime$ is feasible, the local state/input constraints and the terminal constraints
\eqref{distributed_mpc_final:stateinput_constr}--\eqref{distributed_mpc_final:terminal_adm}
are also satisfied by the candidate solution.
\\
It remains to verify the contract constraints.

\emph{Safety envelopes:}
for every $\step\in\mathbb{N}_0^{N-2}$,
\[
\hat p_{\firstagent,\step+1|\distimenext}
-
\hat p_{\firstagent,\step|\distimenext}
=
p_{\firstagent,\step+2|\distime}^{*}
-
p_{\firstagent,\step+1|\distime}^{*}
\in
\mathcal{E}_{\firstagent,\step+1}
\subseteq
\mathcal{E}_{\firstagent,\step},
\]
where the last inclusion follows from Definition~\ref{def:safety_envelope}. For the last increment,
\[
\hat p_{\firstagent,N|\distimenext}
-
\hat p_{\firstagent,N-1|\distimenext}
=
\bar p_{\firstagent|\distime}^{*}
-
p_{\firstagent,N|\distime}^{*}
=
\mathbf{0}_3
\in
\mathcal{E}_{\firstagent,N-1},
\]
where the equality follows from~\eqref{distributed_mpc_final:terminal}. Hence~\eqref{distributed_mpc_final:safety_envelope} holds at time $\distimenext$.

Moreover, since the optimal trajectory at time $\distime$ is feasible, Proposition~\ref{prop:safety_envelopes} yields
\[
p_{\firstagent,\step|\distime}^{*}\in \mathcal{A}(p_{\firstagent,0|\distime}^{*}),
\qquad
\forall \step\in\mathbb{N}_0^{N}.
\]
This fact will be used below for newly appearing neighbors.

\emph{Cells:}
Fix an agent $\firstagent$ and partition the neighbors at time $\distimenext$ into
\begin{align*}
    \mathcal{N}^{0}_{\firstagent}
    &:=
    \mathcal{N}_{\firstagent}(\distime)\cap \mathcal{N}_{\firstagent}(\distimenext),
\quad
\mathcal{N}^{-}_{\firstagent}
    :=
    \mathcal{N}_{\firstagent}(\distime)\setminus \mathcal{N}_{\firstagent}(\distimenext),
\\
    \mathcal{N}^{+}_{\firstagent}
    &:=
    \mathcal{N}_{\firstagent}(\distimenext)\setminus \mathcal{N}_{\firstagent}(\distime).
\end{align*}

Consider first $\secondagent\in\mathcal{N}^{0}_{\firstagent}$, i.e., neighbors that remain neighbors. For every $\step\in\mathbb{N}_0^{N-1}$,
\begin{align*}
    \hat p_{\firstagent,\step|\distimenext}
&=
p_{\firstagent,\step+1|\distime}^{*}
\in
\mathcal{P}_{\firstagent,\step+1|\distime},
\\
\hat p_{\secondagent,\step|\distimenext}
&=
p_{\secondagent,\step+1|\distime}^{*}
\in
\mathcal{P}_{\secondagent,\step+1|\distime},
\end{align*}
and the compatibility of the cells at time $\distime$ implies that these shifted positions are collision-free. At the last stage,
\begin{align*}
    \hat p_{\firstagent,N|\distimenext}
&=
p_{\firstagent,N|\distime}^{*}
=
\bar p_{\firstagent|\distime}^{*}
\in
\mathcal{P}_{\firstagent,N|\distime},
\\
\hat p_{\secondagent,N|\distimenext}
&=
p_{\secondagent,N|\distime}^{*}
=
\bar p_{\secondagent|\distime}^{*}
\in
\mathcal{P}_{\secondagent,N|\distime},
\end{align*}
again by~\eqref{distributed_mpc_final:terminal}. Hence the shifted candidate remains separated from all neighbors in $\mathcal{N}^{0}_{\firstagent}$.

For every $\secondagent\in\mathcal{N}^{-}_{\firstagent}$, the corresponding pairwise cell constraint is removed at time $\distimenext$, so no additional argument is needed.

It remains to consider $\secondagent\in\mathcal{N}^{+}_{\firstagent}$, i.e., newly appearing neighbors. Since $\secondagent \notin \mathcal{N}_{\firstagent}(\distime)$ and since the optimal trajectory at time $\distime$ is feasible, Proposition~\ref{prop:safety_envelopes} gives
\begin{align*}
    \hat p_{\firstagent,\step|\distimenext}
    &=
    p_{\firstagent,\step+1|\distime}^{*}
    \in
    \mathcal{A}(p_{\firstagent,0|\distime}^{*}),
    \qquad \forall \step\in\mathbb{N}_0^{N-1},
\\
    \hat p_{\secondagent,\step|\distimenext}
    &=
    p_{\secondagent,\step+1|\distime}^{*}
    \in
    \mathcal{A}(p_{\secondagent,0|\distime}^{*}),
    \qquad \forall \step\in\mathbb{N}_0^{N-1}.
\end{align*}
Moreover, by~\eqref{distributed_mpc_final:terminal},
\begin{align*}
    \hat p_{\firstagent,N|\distimenext}
    &=
    \bar p_{\firstagent|\distime}^{*}
    =
    p_{\firstagent,N|\distime}^{*}
    \in
    \mathcal{A}(p_{\firstagent,0|\distime}^{*}),
\\
    \hat p_{\secondagent,N|\distimenext}
    &=
    \bar p_{\secondagent|\distime}^{*}
    =
    p_{\secondagent,N|\distime}^{*}
    \in
    \mathcal{A}(p_{\secondagent,0|\distime}^{*}),
\end{align*}
where the last inclusion again follows from Proposition~\ref{prop:safety_envelopes} applied to the feasible optimal trajectories at time $\distime$.
Since $\secondagent \notin \mathcal{N}_{\firstagent}(\distime)$, it follows from the definition of the neighbor set and of the awareness sets that $\mathcal{A}(p_{\firstagent,0|\distime}^{*})
\cap
\mathcal{A}(p_{\secondagent,0|\distime}^{*})
=
\emptyset$.
Hence, the shifted candidate positions belong stage-wise to two disjoint awareness sets. Therefore, the shifted candidate trajectories satisfy the assumptions of Proposition~\ref{prop:non_neighbor_safe_set}, with awareness sets centered at the positions at time $\distime$. By Proposition~\ref{prop:non_neighbor_safe_set}, the corresponding candidate positions are collision-free for all $k\in\mathbb{N}_0^{N}$.

Thus, for every pair of agents that are neighbors at time $\distimenext$, the corresponding candidate positions are collision-free for all $\step\in\mathbb{N}_0^{N}$. Therefore, applying Lemma~\ref{lem:cell_existence} to the full family of candidate positions at time $\distimenext$ yields the existence of a compatible family of cells $\{\mathcal{P}^{N+1}_{\firstagent,\distimenext}\}_{\firstagent\in\mathcal{M}}$ such that $\hat p_{\firstagent,\step|\distimenext}\in \mathcal{P}_{\firstagent,\step|\distimenext}$, $\forall \firstagent\in\mathcal{M},\ \forall \step\in\mathbb{N}_0^{N}$.
Hence, the candidate solution satisfies~\eqref{distributed_mpc_final:hyperplane} at time $\distimenext$.

We have therefore shown that the candidate solution is feasible for~\eqref{distributed_mpc_final} at time $\distimenext$, and recursive feasibility follows by induction from feasibility at $\distime=0$.

Since the applied input is always the first element of a feasible solution, the local state and input constraints hold for all $\distime\ge 0$. Moreover, collision avoidance follows from Proposition~\ref{prop:partitions} for current neighbors and from Proposition~\ref{prop:non_neighbor_safe_set} together with Proposition~\ref{prop:safety_envelopes} for agents that are not neighbors at the current time. Thus,~\eqref{eq:collision_avoidance} holds for all $\distime\ge 0$.
\end{proof}

\subsection{Algorithm}
\label{sec:algorithm}

The full online procedure executed by each agent is summarized in Algorithm~\ref{alg:safety_envelope}. We assume a feasible initialization at time $\distime=0$, i.e., each agent has a feasible local solution and an associated planned position trajectory; for example, all agents may initially be at standstill in different locations. In the considered application, the information exchanged among neighbors is the planned position trajectory
$\tau_{\firstagent,\distime}^{0:N}
:=
\{p_{\firstagent,0|\distime},\dots,p_{\firstagent,N|\distime}\}$,
which is sufficient to construct the local cells, since the collision-avoidance contracts are defined in the position space. At each time step, agent $\firstagent$ receives from its current neighbors the trajectories planned at the previous step, computes the cells accordingly, solves its local FHOCP, applies the first control input, and stores the new planned trajectory for the next communication round. A practical construction of cells and safety envelopes is given in Section~\ref{sec:application_to_racing}.

\begin{algorithm}[t!]
\caption{Anytime Plug-and-Play MPC executed by agent $\firstagent$}
\label{alg:safety_envelope}
\begin{algorithmic}
\small
\STATE Given a feasible initialization at $\distime=0$, including a feasible planned trajectory $\tau_{\firstagent,0}^{0:N}$
\STATE Choose $\mathcal{E}_{\firstagent}^{N}$ such that Definition~\ref{def:safety_envelope} holds
\STATE Apply $u_{\firstagent,0|0}^{*}$
\FOR{$\distime = 1,2,\dots$}
    \FOR{each $\secondagent \in \mathcal{N}_{\firstagent}(\distime)$}
        \STATE Send $\tau_{\firstagent,\distime-1}^{0:N}$ to agent $\secondagent$
        \STATE Receive $\tau_{\secondagent,\distime-1}^{0:N}$ from agent $\secondagent$
    \ENDFOR
    \STATE Compute $\mathcal{P}^{N+1}_{\firstagent,\distime}$ such that Definition~\ref{def:partitions} holds
    \STATE Solve~\eqref{distributed_mpc_final} with $\mathcal{P}^{N+1}_{\firstagent,\distime}$ and $\mathcal{E}_{\firstagent}^{N}$
    \STATE Store the resulting planned position trajectory $\tau_{\firstagent,\distime}^{0:N}$
    \STATE Apply $u_{\firstagent,0|\distime}^{*}$
\ENDFOR
\end{algorithmic}
\end{algorithm}
A discussion, together with simulation and hardware results for Algorithm~\ref{alg:safety_envelope}, is provided in Section~\ref{sec:sim_and_hardware}.
\section{Application to autonomous driving}\label{sec:application_to_racing}
We demonstrate the effectiveness of the Anytime Plug-and-Play algorithm in a multi-agent autonomous driving scenario with a finite fleet of miniature race cars (agents) indexed by $\firstagent \in \mathcal{M} = \{1, \dots, M\}$. Each agent is modeled using the dynamic bicycle model~\cite{bodmer2024}, with state and input vectors
\begin{align*}
    x_{\firstagent} &= \begin{bmatrix}
        p_{\firstagent, x} & p_{\firstagent, y} & \gamma_{\firstagent} & v_{\firstagent, x} & v_{\firstagent, y} & \omega_{\firstagent}
    \end{bmatrix} \in \mathbb{R}^{n_x}, \\ 
    u_{\firstagent} &= \begin{bmatrix}
        \delta_{\firstagent} & T_{\firstagent}
    \end{bmatrix} \in \mathbb{R}^{n_u},
\end{align*}
where $(p_{\firstagent, x}, p_{\firstagent, y})$ is the position, $\gamma_{\firstagent}$ the heading, $(v_{\firstagent, x},v_{\firstagent, y})$ the body-frame velocities, and $\omega_{\firstagent}$ the angular velocity.  The control inputs are the steering angle $\delta_{\firstagent}$ and torque $T_{\firstagent}$. The agents race on a figure-eight track, where the intersection provides a realistic challenge by requiring collision avoidance and creating overtaking opportunities. 

\subsection{Anytime Plug-and-Play Multi-Trajectory model predictive contouring control (mt-MPCC)}
\label{sec:mt_mpcc}
The proposed Anytime PnP MPC controller~\eqref{distributed_mpc_final} provides collision avoidance guarantees and constraint satisfaction; however, it does not yet define a specific metric for highly dynamical autonomous driving performance. To address this, we augment the Anytime PnP MPC as a multi-trajectory MPC~\cite{saccani2022multitrajectory}, where we optimize over two coupled trajectories per agent $\firstagent$: 1) a safe trajectory, denoted with the superscript ($s$) that implements the Anytime PnP MPC formulation~\eqref{distributed_mpc_final}, and 2) an exploitation trajectory, denoted with the superscript ($e$) that implements an MPCC controller~\cite{faulwasser2009model} focusing on high-speed reference tracking without considering other agents. Both trajectories share the same first control input. Hence, the applied control action coincides with the one of the safe trajectory, and the recursive-feasibility, collision-avoidance, and constraint-satisfaction properties established for~\eqref{distributed_mpc_final} are retained.
We modify the classic MPCC cost function by augmenting it with a yaw-alignment term $\epsilon^{\gamma} = \gamma - \gamma_{\text{ref}}$, 
which promotes consistency with the track orientation. This modification allows for reduced contouring penalties, enabling overtaking maneuvers while avoiding unstable behavior.

The resulting Anytime PnP multi-trajectory MPCC optimization problem for agent $\firstagent$ is
\begin{subequations}\label{mtmpcc}
\begin{alignat}{2}
   &\min_{\substack{x_{\firstagent,0|\distime}^{s,e},\hdots,x_{\firstagent,N|\distime}^{s,e},\\
   u_{\firstagent,0|\distime}^{s,e},\hdots,u_{\firstagent,N-1|\distime}^{s,e},\\
   \bar{x}_{\firstagent|\distime}^{s},\,\bar{u}_{\firstagent|\distime}^{s}}}
   && \sum_{\step=0}^{N-1}
   J_{\firstagent}\!\left(x_{\firstagent,\step|\distime}^{s,e},
   u_{\firstagent,\step|\distime}^{s,e},
   \bar{x}_{\firstagent|\distime}^{s}\right)
   \label{mtmpcc:cost} \\
   &\text{subject to}
   && x_{\firstagent,0|\distime}^{s}
   =
   x_{\firstagent,0|\distime}^{e}
   =
   x_{\firstagent}(\distime)
   \label{mtmpcc:init_constr},\\
   &&
   & u_{\firstagent,0|\distime}^{s}
   =
   u_{\firstagent,0|\distime}^{e}
   \label{mtmpcc:init_input_constraint}, \\
   &&
   & x_{\firstagent,\stepnext|\distime}^{s,e}
   =
   f_\firstagent(x_{\firstagent,\step|\distime}^{s,e},u_{\firstagent,\step|\distime}^{s,e})
   \label{mtmpcc:dynamics_constr}, \\
      &&
   & x_{\firstagent,\step|\distime}^{s,e}\in\mathcal{X}_{\firstagent}
   \label{mtmpcc:state_constr}, \\
   &&
   & u_{\firstagent,\step|\distime}^{s,e}\in\mathcal{U}_{\firstagent}
   \label{mtmpcc:input_constr}, \\
   &&
   & p_{\firstagent,\step|\distime}^{s}
   =
   C_{\firstagent}x_{\firstagent,\step|\distime}^{s}
   \label{mtmpcc:position_constr}, \\
   &&
   & p_{\firstagent,\step|\distime}^{s}\in \mathcal{P}_{\firstagent,\step|\distime}
   \label{mtmpcc:hyperplane}, \\
   &&
   & \Delta p_{\firstagent,\step|\distime}^{s}\in \mathcal{E}_{\firstagent,\step}
   \label{mtmpcc:safety_envelope}, \\
   &&
   & x_{\firstagent,N|\distime}^{s} = \bar{x}_{\firstagent|\distime}^{s}
   \label{mtmpcc:terminal}, \\
   &&
   & \bar{x}_{\firstagent|\distime}^{s}
   =
   f_{\firstagent}(\bar{x}_{\firstagent|\distime}^{s},\bar{u}_{\firstagent|\distime}^{s})
   \label{mtmpcc:terminal_eq}, \\
   &&
   & (\bar{x}_{\firstagent|\distime}^{s},\bar{u}_{\firstagent|\distime}^{s})
   \in
   \mathcal{X}_{\firstagent}\times\mathcal{U}_{\firstagent}
   \label{mtmpcc:terminal_adm}
\end{alignat}
\end{subequations}
The constraints~\eqref{mtmpcc:dynamics_constr}, \eqref{mtmpcc:input_constr}, and~\eqref{mtmpcc:safety_envelope} are imposed for all $\step\in\mathbb{N}_0^{N-1}$, while~\eqref{mtmpcc:state_constr}, \eqref{mtmpcc:position_constr}, and~\eqref{mtmpcc:hyperplane} are imposed for all $\step\in\mathbb{N}_0^{N}$. 
Moreover, $\Delta p_{\firstagent,\step|\distime}^{s}
:=
p_{\firstagent,\stepnext|\distime}^{s}
-
p_{\firstagent,\step|\distime}^{s}$, $\forall \step\in\mathbb{N}_0^{N-1}$, and, by~\eqref{mtmpcc:terminal}, one has $p_{\firstagent,N|\distime}^{s}=C_\firstagent \bar x_{\firstagent|\distime}^{s}$.
The input constraints~\eqref{mtmpcc:input_constr} are polytopic, and the state constraints~\eqref{mtmpcc:state_constr} capture both physical limits and track boundaries, which can be formulated polytopically. Collision-avoidance constraints~\eqref{mtmpcc:hyperplane}--\eqref{mtmpcc:safety_envelope}, as well as the terminal conditions~\eqref{mtmpcc:terminal}--\eqref{mtmpcc:terminal_adm}, are enforced exclusively on the safe trajectory.

Because the safe trajectory satisfies the same structural constraints as~\eqref{distributed_mpc_final}, it inherits the recursive-feasibility and safety properties of the proposed Anytime PnP MPC scheme. The exploitation trajectory is used only to improve performance, while the shared first-input constraint~\eqref{mtmpcc:init_input_constraint} ensures that the closed-loop control action remains safety certified~\cite{saccani2022multitrajectory}.

\subsection{Anytime Plug-and-Play Ingredient Design} \label{sec:anytime_plug_and_play_design}
In this section, we discuss how the cells and safety envelopes introduced in Definition~\ref{def:partitions} and Definition~\ref{def:safety_envelope} can be designed, such that Proposition~\ref{prop:partitions} and Proposition~\ref{prop:safety_envelopes} hold. 

\subsubsection{Cells} \label{sec:partition_ex}
The cells $\mathcal{P}_{\firstagent , \distime}^{N+1}$ as defined in Definition~\ref{def:partitions} can be implemented as separating hyperplanes based on Voronoi partitioning~\cite{Rakovic2021}. The separating hyperplane between agents $\firstagent$ and $\secondagent$ at prediction step $\step$ is defined as:
\begin{align}
    \alpha^{(\firstagent,\secondagent)}_{(\step|\distime)} \cdot \mathbf{p} \leq \beta^{(\firstagent,\secondagent)}_{(\step|\distime)} - \frac{\epsilon_{(\firstagent,\secondagent)}}{2},
\end{align}
where $\mathbf{p} = \begin{bmatrix}p_x & p_y\end{bmatrix}$ and the hyperplane parameters are given by
\begin{align}
    \alpha^{(\firstagent,\secondagent)}_{(\step|\distime)} = \left( \frac{\mathbf{p}_{\secondagent,(\step|\distime)} - \mathbf{p}_{\firstagent,(\step|\distime)}}{||\mathbf{p}_{\secondagent,(\step|\distime)} - \mathbf{p}_{\firstagent,(\step|\distime)}||}\right)^\top, \\
    \beta^{(\firstagent,\secondagent)}_{(\step|\distime)} = \alpha^{(\firstagent,\secondagent)}_{(\step|\distime)} \cdot \frac{\mathbf{p}_{\firstagent,(\step|\distime)} +\mathbf{p}_{\secondagent,(\step|\distime)}}{2}.
\end{align}
Here, $\mathbf{p}_{\firstagent, (\step |\distime)}$ and $\mathbf{p}_{\secondagent, (\step |\distime)}$ denote the positions of agents $\firstagent$ and $\secondagent$ at the $\step$-th prediction step from the previous MPC solution. The superscript ($\firstagent$,$\secondagent$) indicates that the hyperplane is computed by agent 
$\firstagent$ with respect to agent $\secondagent$. Unlike classical Voronoi partitioning, where the separating hyperplane is located at the midpoint between the two agents, a safety margin $\epsilon_{(\firstagent, \secondagent)}$ is incorporated to ensure a minimum required separation. This margin satisfies $\epsilon_{(\firstagent,\secondagent)} \leq \|\mathbf{p}_{\firstagent,(\step|\distime)} - \mathbf{p}_{\secondagent,(\step|\distime)}\|$, thus ensuring collision avoidance when considering the volume of the agents.

In practice, particularly in highly dynamical driving scenarios, the use of such hyperplanes can restrict overtaking maneuvers. When two agents travel in the same direction, the resulting hyperplane becomes approximately perpendicular to their direction of motion. 

To alleviate this limitation and promote more natural overtaking behavior, we modify the hyperplane orientation. As shown in Figure~\ref{fig:take_over_45_deg}, the hyperplanes are deliberately tilted to create more space along the overtaking lane. For the figure-eight track used in our experiments, the orientation of the hyperplane is further adapted based on local track curvature. This strategy encourages overtaking along the inner side of a curve, as depicted in Figure~\ref{fig:curve_take_over_45_deg}.
\begin{figure}[H]
    \centering
  \subfloat[\label{fig:take_over_45_deg}]{%
       \includegraphics[width=0.5\linewidth]{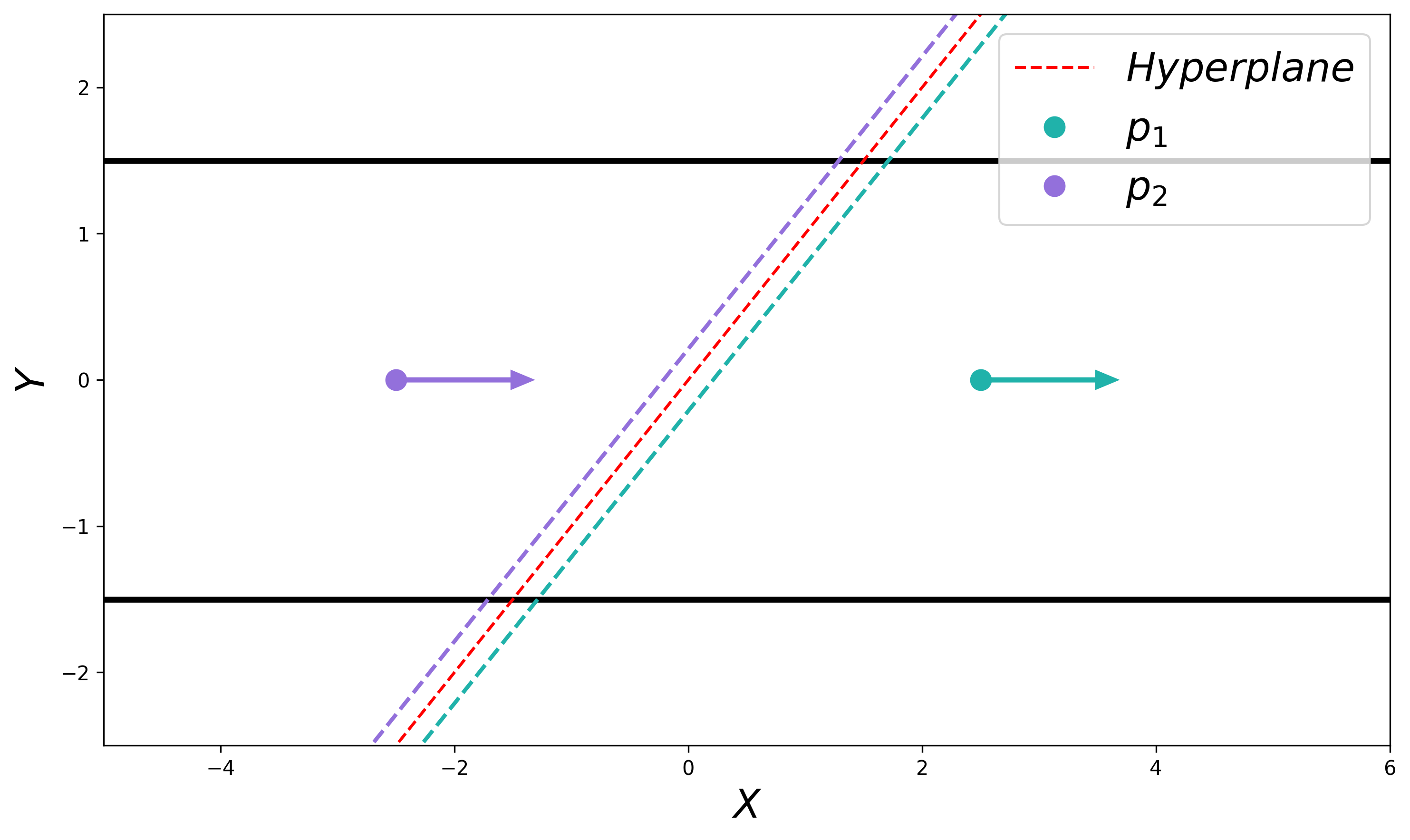}}
    \hfill
  \subfloat[\label{fig:curve_take_over_45_deg}]{%
        \includegraphics[width=0.5\linewidth]{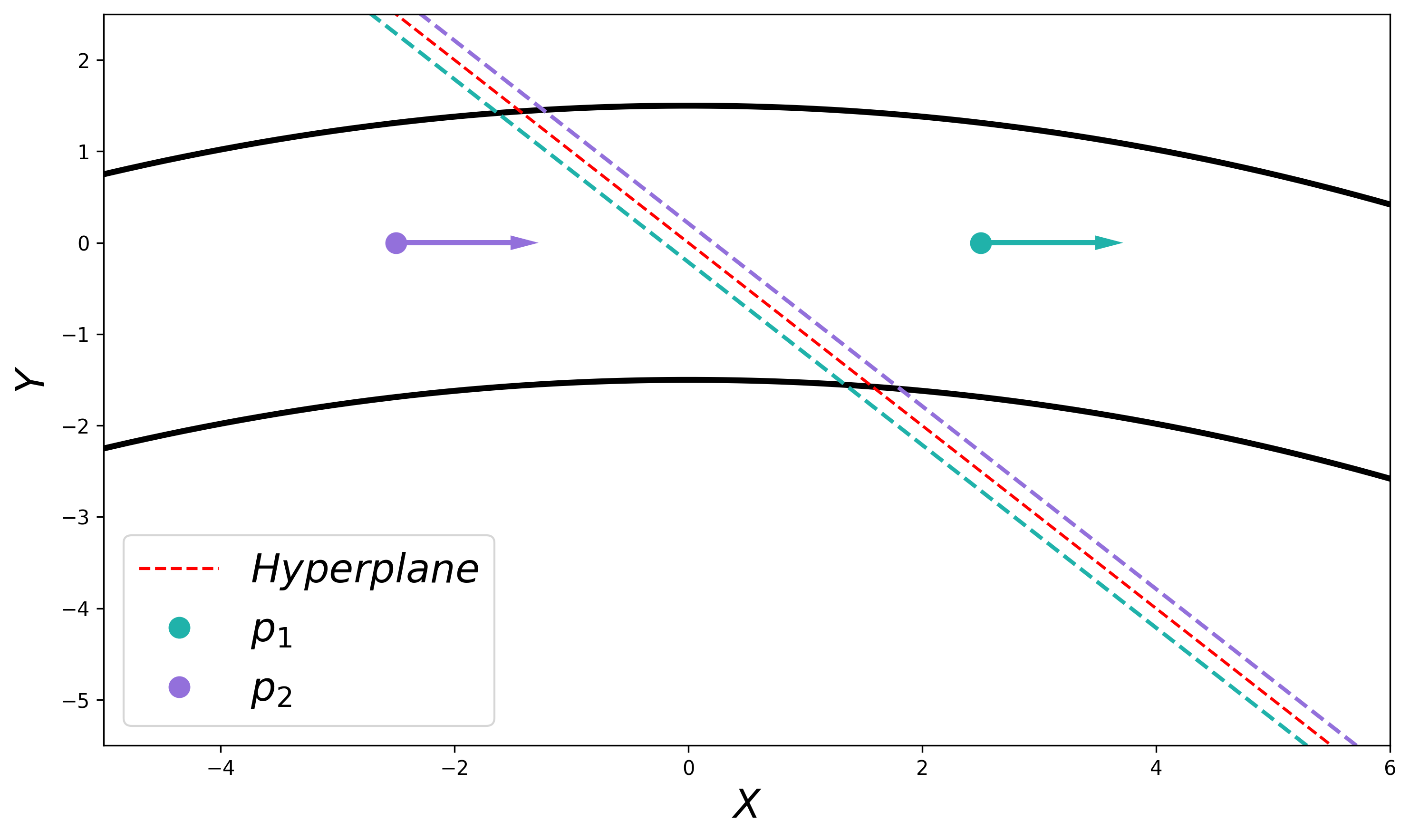}}
    \\
  \caption{\textbf{(a) Hyperplane angle:} Angling the hyperplane between neighboring agents results in less conservative behavior in regard to overtaking maneuvers. The hyperplane (red) at a 45$^\circ$ angle to the direction of movement (purple/blue arrows) between two agents reduces the difficulty of take-over maneuvers. The blue and purple hyperplanes account for a safety margin. \textbf{(b) Hyperplane angle based on track:} The angle of the hyperplane between neighboring agents depends on the curvature of the track. The hyperplane (red) at a 45$^\circ$ angle in a left turn. The blue and purple hyperplanes account for a safety margin.}
  \label{fig:hyperplane_angling} 
  
\vspace{-3mm}
\end{figure}

\subsubsection{Safety Envelope Design}\label{sec:safety_envelope_ex}
We design the safety envelopes as squares resulting in four hyperplane constraints per stage per agent. Higher-dimensional polytopes (hexagons) were evaluated as well, but showed no significant improvement in simulation. As stated in Definition~\ref{def:safety_envelope}, the $N$ safety envelopes are defined as a scaled awareness set, where the scalars $\alpha_{\firstagent, \step}$ are non-increasing along the prediction horizon, and sum up to one.
The scalars $\alpha_{\firstagent, \step}$ and the shape of the safety envelopes are considered tuning parameters. In the following, we present three possible implementations:
\begin{enumerate}
\item \textbf{Uniform safety envelopes:} Consider the square under-approximation of the awareness set. By scaling this under-approximation by the length of horizon $N$, uniform safety envelopes are created.

\item  \textbf{Continuously decreasing safety envelopes}: 
The safety envelope size monotonically decreases along the horizon.

\item \textbf{$\frac{1}{3}$ vs. $\frac{2}{3}$ safety envelopes:}
The first $\frac{1}{3}$ of the safety envelopes sum to one half of the awareness-set size, while the remaining $\frac{2}{3}$ sum to the other half, i.e.,
\begin{align*}
\sum_{\step=0}^{N-1} \alpha_{\firstagent,\step}
&=
\underbrace{\sum_{\step=0}^{\lfloor \frac{N-1}{3} \rfloor} \alpha_{\firstagent,\step}}_{= \frac{1}{2}}
+
\underbrace{\sum_{\step=\lfloor \frac{N-1}{3} \rfloor+1}^{N-1} \alpha_{\firstagent,\step}}_{= \frac{1}{2}}
= 1
\end{align*}
with
\begin{align*}
\alpha_{\firstagent,\step}=
\begin{cases}
\dfrac{1}{2\left(\lfloor \frac{N-1}{3} \rfloor+1\right)},
& \text{if } \step \le \lfloor \frac{N-1}{3} \rfloor,\\[2mm]
\dfrac{1}{2\left(N-\lfloor \frac{N-1}{3} \rfloor-1\right)},
& \text{otherwise}.
\end{cases}
\end{align*}
\end{enumerate}
The \textit{$\frac{1}{3}$ vs. $\frac{2}{3}$} safety envelopes resulted in improved speed compared to the uniform and continuously decreasing safety envelopes.

\section{Simulation and Hardware Experiments}\label{sec:sim_and_hardware}

We use the CRS software framework \cite{Carron_2023_crs, bodmer2024} to implement the approach proposed in Section~\ref{sec:application_to_racing}. We run the computations on an Intel Core i9-13900KS CPU (base frequency 3.20 GHz) and 48 GB of RAM.

We employ acados~\cite{Verschueren2021} using SQP-RTI as the MPC solver due to its capability to handle time-varying constraints efficiently. The MPC is run with a horizon length of 15 and at a sampling time of 20Hz. 
The CRS framework enables real-time communication between the local machine and the on-board microcontrollers and sensors via WiFi.
State estimation is performed using the framework’s built-in Extended Kalman Filter (EKF), and global pose information is provided by an external motion capture system\footnote[1]{\url{https://www.qualisys.com/cameras/arqus/}}.

\begin{figure*}[t!]
    \centering
    \includegraphics[trim={0cm 0.5cm 0cm 24cm}, clip, width=1.0\linewidth]{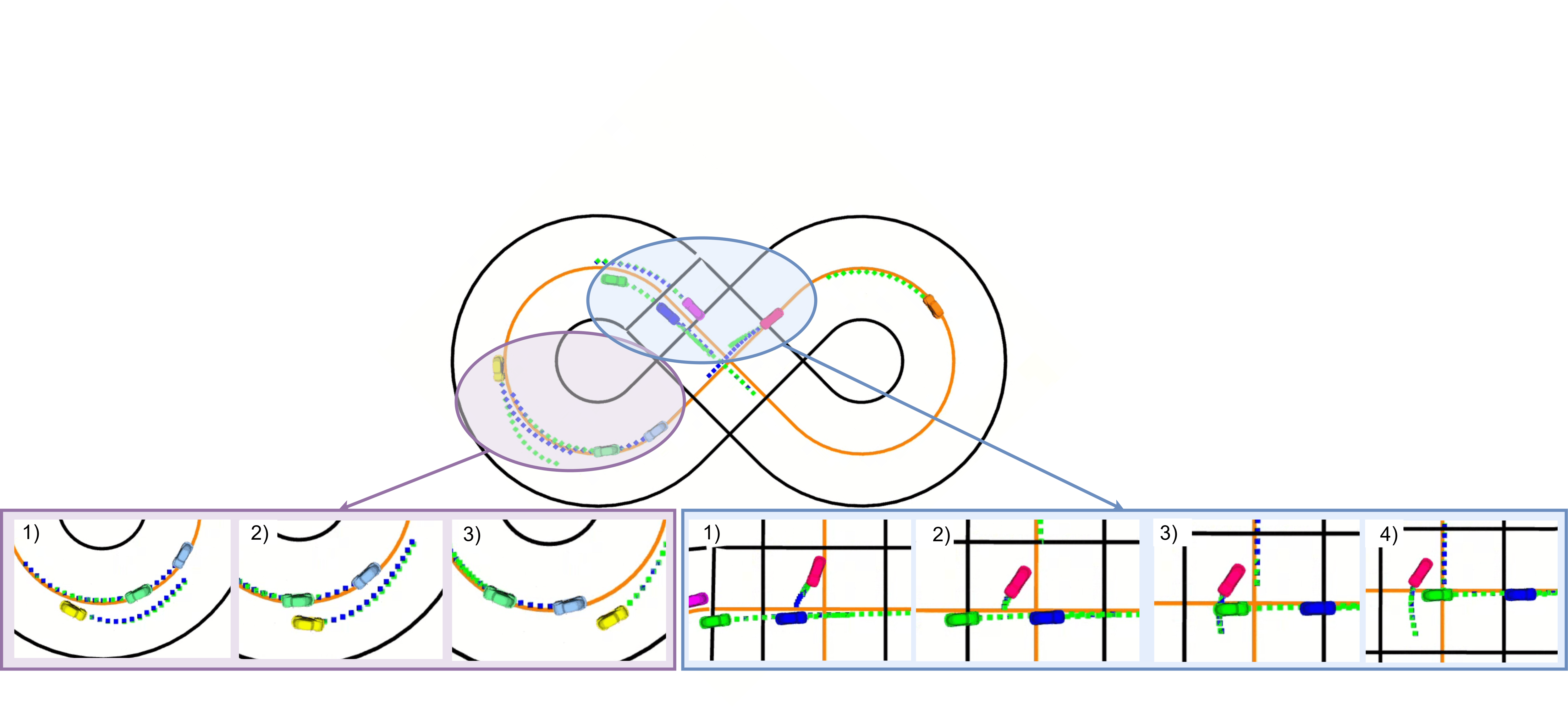}
    \caption{The colored dots denote the predicted positions along the MPC horizon associated with the safe (green) and exploitation (blue) trajectories. The purple bubble highlights an \textbf{oncoming traffic} scenario. The yellow agent moves off the centerline in order to avoid collisions with the oncoming green and blue agents. Initially, only the safe trajectory (green) diverts (see the yellow car in the purple circle of the upper image) from the centerline-reference, as the collision avoidance constraint persists, the exploitation trajectory adapts as well (1) - (3). The blue bubble highlights an \textbf{intersection} scenario. The red agent approaches a busy intersection. As the blue and green agents cross the intersection (1) - (3), the red agent slows down and diverts from the centerline reference in order to remain collision-free. Once the area is clear, the red agent speeds up again (4). }
    \label{fig:simulations}
    
\vspace{-3mm}
 \end{figure*}

\subsection{Simulation Experiments}
Simulations are conducted with up to eight agents operating on the figure-eight track. Agents are randomly initialized along the track, with a subset configured to race in the opposite direction to simulate head-on interactions.

Figures~\ref{fig:simulations} and~\ref{fig:sim_take_over} illustrate representative scenarios observed during simulation experiments involving eight agents. 
The reference path is given as the centerline (orange), while the predicted safe and exploitation trajectories are shown as green and blue dotted lines, respectively. Figure~\ref{fig:sim_take_over} further illustrates the safety envelopes (shown in red). Note that they are larger at the beginning and become smaller towards the end of the horizon, this is due to the choice of the \textit{$\frac{1}{3}$ vs. $\frac{2}{3}$} safety envelope, described in Section \ref{sec:safety_envelope_ex}.

In the following, we analyze three scenarios emerging in simulation. 
\subsubsection{Overtaking maneuver}
Consider two agents traveling in the same direction but at different speeds. We refer to the faster agent passing the slower agent as an \textit{overtaking maneuver}.
Figure~\ref{fig:sim_take_over} illustrates how the faster agent (purple) passes the slower agent (green) on the inner side, due to the curvature-dependent angling of the hyperplane.
\begin{figure}[t!]
    \centering
       \includegraphics[trim=0.1cm 0.1cm 0.2cm 0.2cm, clip, width=1.0\linewidth]{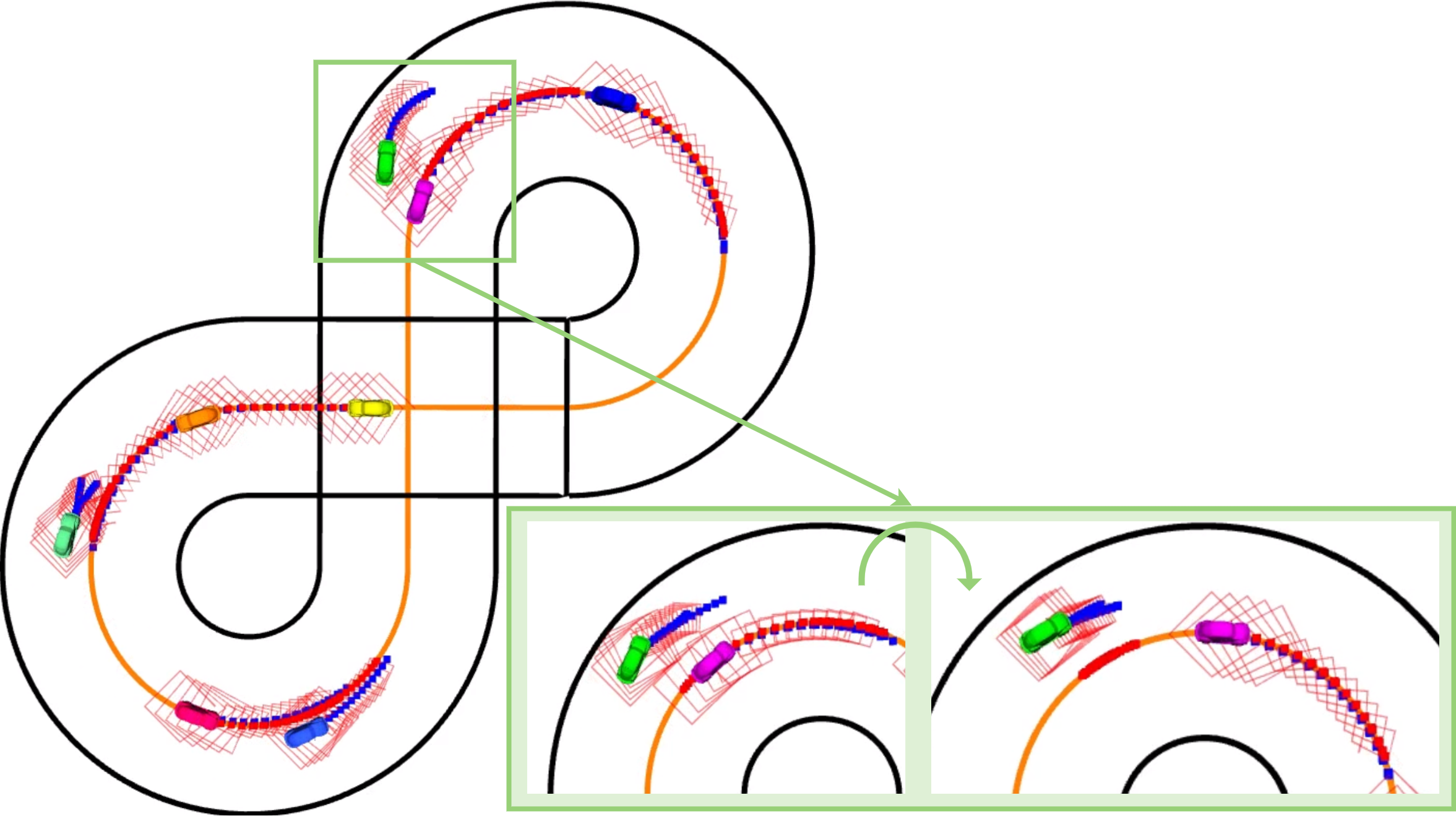}
   \caption{\textbf{Simulation experiment - Take over:} The purple agent overtakes the green agent on the inner rim of the curve, due to the 45$^\circ$ curve-dependent hyperplane angle. The green box highlights the progression (from left to right) of the simulation, focusing on the green and purple agents. The red boxes represent the safety envelopes that constrain the successive position increments along the prediction horizon.}
  \label{fig:sim_take_over} 
\vspace{-3mm}
\end{figure}
\subsubsection{Intersection}
The blue-highlighted section of Figure~\ref{fig:simulations} highlights one of the main difficulties with a figure-eight track - the intersection. The green agent is forced to \textit{swerve} in order to avoid colliding with the orange agent.

\subsubsection{Oncoming traffic}
With an increasing number of agents on the figure-eight track, avoiding oncoming traffic becomes increasingly difficult. The purple-highlighted section of Figure~\ref{fig:simulations} highlights that multiple cars need to avoid one another simultaneously.

\subsection{Hardware Experiments}
The proposed method is deployed on physical miniature race cars to validate its real-time performance and robustness. The experimental platform consists of custom-built 1/28th scale RC vehicles, as described in~\cite{Carron_2023_crs}.
Hardware experiments are conducted with between two and six agents operating on a figure-eight track. To evaluate the method under challenging interaction scenarios, some experiments feature agents driving in opposite directions, thereby inducing frequent oncoming traffic situations. 
Constraints are softened to ensure feasibility under real-world disturbances and uncertainties.

Figure~\ref{fig:6_agents} illustrates a challenging scenario in which multiple agents simultaneously approach the intersection of the figure-eight track. One agent, highlighted with a red circle, is emphasized to showcase its behavior during this high-density interaction. The driving direction of each agent is indicated by white arrows. Notably, the highlighted agent successfully navigates through the intersection, demonstrating the effectiveness of the proposed method in resolving complex multi-agent interactions without collisions.
A quantitative evaluation of the six-agent experiment is presented in Figure~\ref{fig:hardware_plots}. Figure~\ref{fig:min_dist} shows the minimum pairwise distance between any two agents throughout the experiment. The dashed line indicates the prescribed safety distance, below which a constraint violation occurs. We observe from the plot that collisions are avoided throughout the experiment, thereby ensuring safe operation.  Figure~\ref{fig:nr_neighbors} shows the number of neighboring agents a given agent has throughout the experiment. We note that the number of neighbors often varies over time, motivating the need for the proposed scheme that remains safe under a time-varying number of neighbors.
\begin{figure*}
    \centering
    \includegraphics[width=1.0\linewidth]{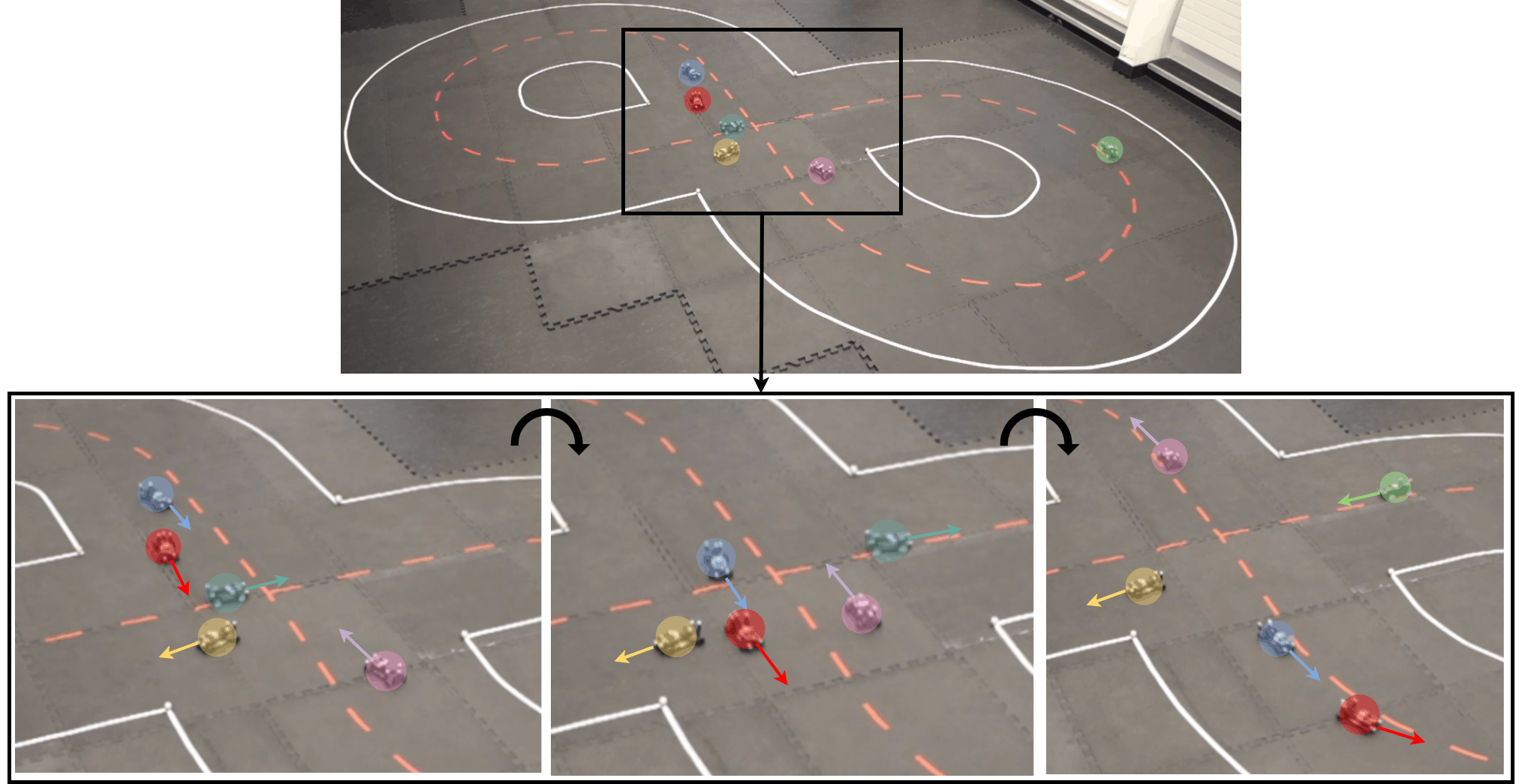}
    \caption{\textbf{Hardware experiment:} Each agent is highlighted with a different color, and its corresponding direction is illustrated by the same colored vector. Five agents approach the intersection at the same time and safely navigate through it.}
    \label{fig:6_agents}
\vspace{-3mm}
\end{figure*}
The hardware experiments demonstrate the real-world feasibility of the algorithm\footnote[2]{Hardware examples: \url{anytime-plug-and-play.github.io/}.}, allowing for fast, dynamical driving in multi-agent setups with collision avoidance. 

\begin{figure}[t]
    \centering 
  \subfloat[\label{fig:min_dist}]{%
       \includegraphics[width=1.0\linewidth]{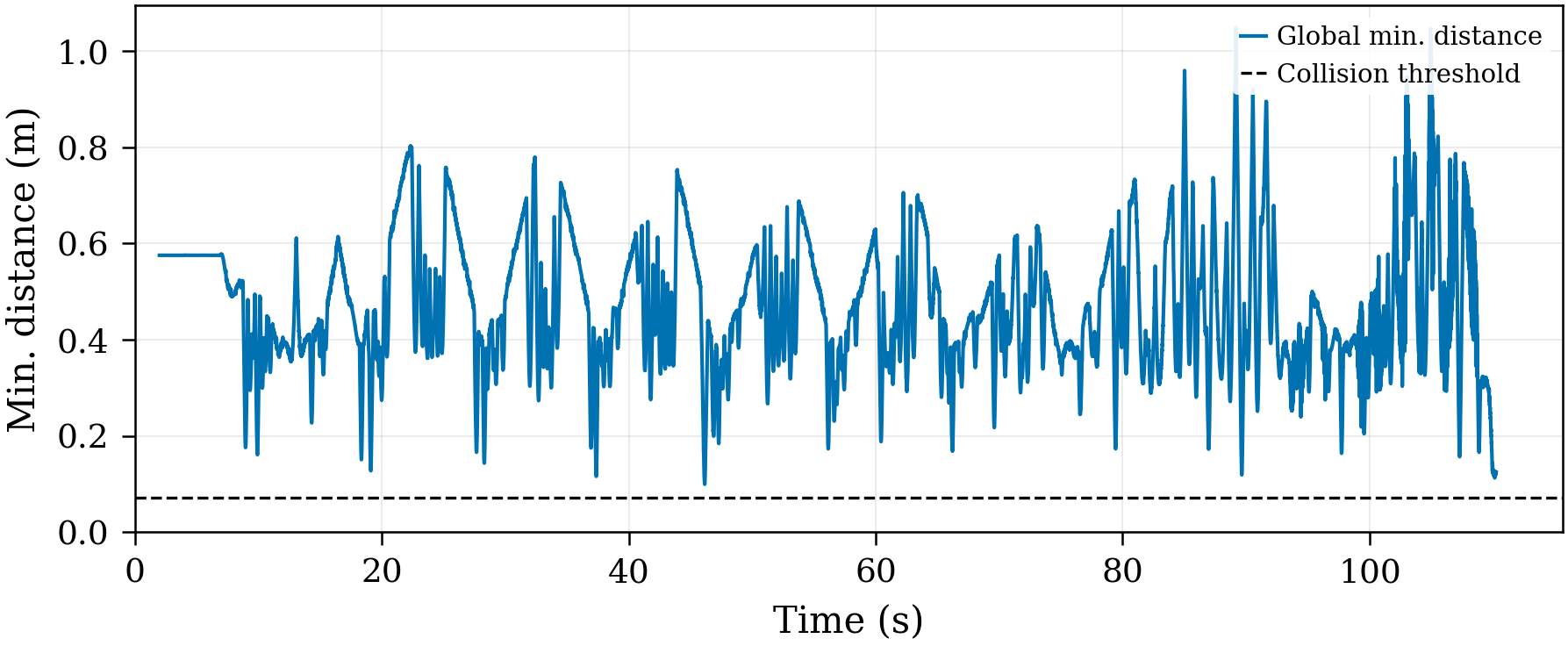}}
    \\
  \subfloat[\label{fig:nr_neighbors}]{%
        \includegraphics[width=1.0\linewidth]{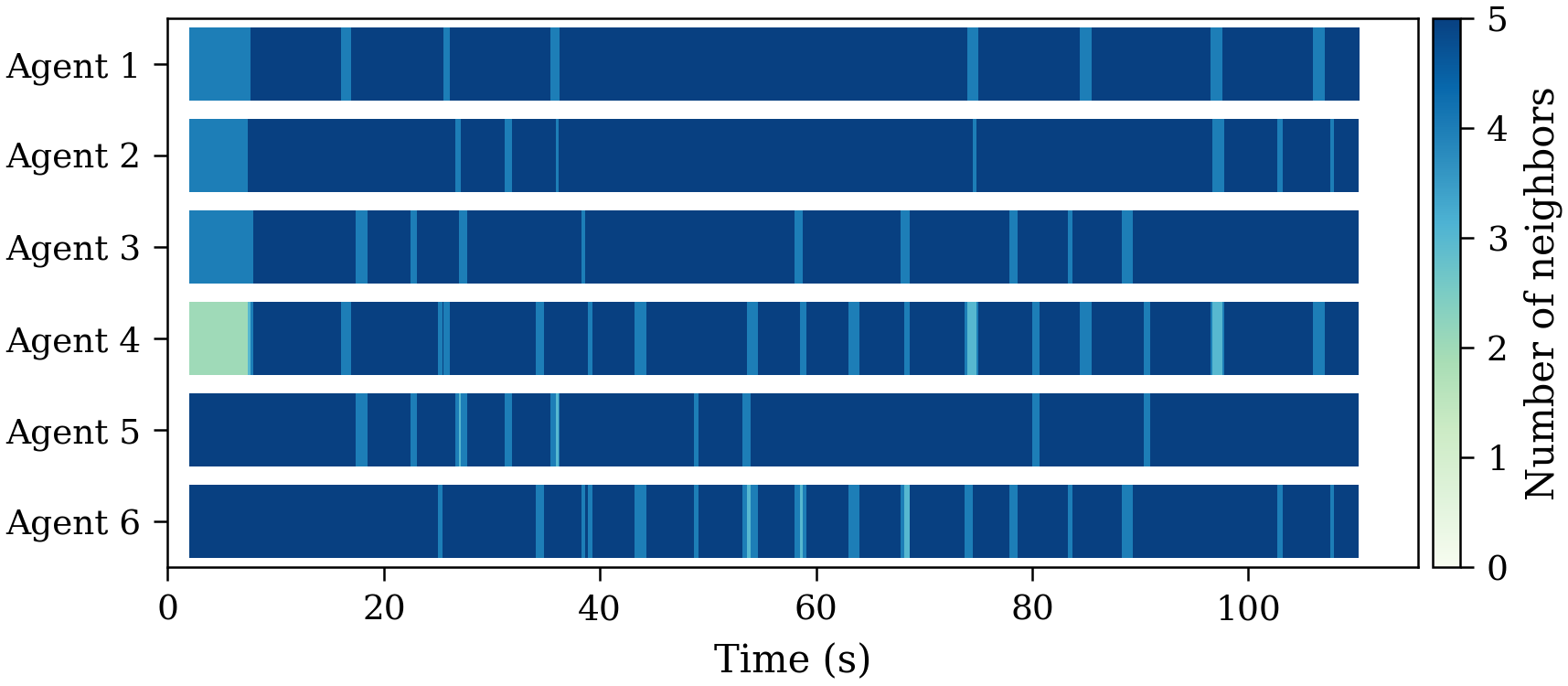}}
    \\
  \caption{\textbf{(a)} Minimal distance between any two agents throughout the duration of the experiment. The dashed line indicates the minimal distance (determined by the size of the agents) required, to avoid collisions. \textbf{(b)} Number of neighbors a given agent has throughout the experiment. The experiment was conducted using 6 agents, making the maximum number of neighbors 5.}
  \label{fig:hardware_plots} 
  
\vspace{-3mm}
\end{figure}

\section{Discussion and Limitations}
\label{sec: Discussion}
Prior distributed approaches often rely on centralized coordination, static topologies, fixed priorities, or request-based plug-and-play, limiting their applicability in robotic settings. In the following, we organize these methods into three categories: (i) approaches that introduce a global coordination among agents~\cite{Alrifaee_priority, ROQUE20203156}, (ii) approaches that introduce additional constraints to guarantee safety~\cite{FARINA20121088, Lucia2015contract, wiltz2022consistency}, 
and (iii) traffic-specific methods~\cite{BidirectionalMixedTraffic, AssistingDrivers, priority_based_highway_merging} (e.g., merging or overtaking). Game-theoretic approaches have also been proposed for multi-agent coordination, but since they address strategic interactions rather than safety-certified contract-based distributed MPC, we do not compare them directly with the proposed framework.  We compare the considered approaches in terms of their assumptions, communication requirements, and safety/feasibility guarantees with the proposed anytime plug-and-play formulation. Table~\ref{tab:comparison} summarizes the key differences. 

\begin{table*}[ht]
\caption{Comparison of approaches and their assumptions}
\label{tab:comparison}
\centering
\setlength{\tabcolsep}{5pt}
\renewcommand{\arraystretch}{1.15}
\resizebox{\textwidth}{!}{%
\begin{tabular}{@{} c l cccc ccc @{}}
\toprule
& & & & & & \multicolumn{3}{c}{As introduced by~\cite{SCATTOLINIreview}} \\
\cmidrule(l){7-9}
& \textbf{Approach}
& \textbf{Distributed}
& \textbf{Nonlinear Dynamics}
& \textbf{Plug-and-Play}
& \textbf{Safety Guarantees}
& \textbf{Non-cooperative}
& \textbf{Non-Iterative}
& \textbf{Non-Sequential} \\
\midrule

\multirow{3}{*}{\rotatebox[origin=c]{90}{\parbox{1.0cm}{\centering\small Global\\Coord.}}}
& Priority-based~\cite{Alrifaee_priority}
    & \checkmark & \checkmark & $\times$ & $\times$ & \checkmark & $\times$ & $\times$ \\
& Leader-Follower~\cite{ROQUE20203156}
    & \checkmark & \checkmark & $\times$ & $\times$ & $\times$ & \checkmark & $\times$ \\
& Distributed SQP~\cite{Zanon_semi_distributed_SQP_intersections}
    & partially  & \checkmark & $\times$ & \checkmark & \checkmark & \checkmark & \checkmark \\

\midrule

\multirow{6}{*}{\rotatebox[origin=c]{90}{\parbox{1.4cm}{\centering\small Constraint-based}}}
& Safety Tube~\cite{FARINA20121088}
    & \checkmark & $\times$ & $\times$ & $\times$ & \checkmark & \checkmark & \checkmark \\
& Consistency Constraint~\cite{wiltz2022consistency}
    & \checkmark & \checkmark & $\times$ & \checkmark & \checkmark & \checkmark & \checkmark \\
& Contract Constraint~\cite{Lucia2015contract}
    & \checkmark & \checkmark & request-based & \checkmark & \checkmark & \checkmark & \checkmark \\
& ECBF~\cite{goarin2024decentralizednonlinearmodelpredictive}
    & \checkmark & \checkmark & $\times$ & \checkmark & \checkmark & \checkmark & \checkmark \\
& CBF-induced QP~\cite{Dimos_CBF}
    & \checkmark & \checkmark & $\times$ & \checkmark & \checkmark & \checkmark & \checkmark \\
    & \cellcolor{blue!10}\textbf{Anytime Plug-and-Play (ours)}
    & \cellcolor{blue!10}\pmb{\checkmark} & \cellcolor{blue!10}\pmb{\checkmark} & \cellcolor{blue!10}\textbf{anytime} & \cellcolor{blue!10}\pmb{\checkmark} & \cellcolor{blue!10}\pmb{\checkmark} & \cellcolor{blue!10}\pmb{\checkmark} & \cellcolor{blue!10}\pmb{\checkmark} \\

\midrule



\multirow{3}{*}{\rotatebox[origin=c]{90}{\parbox{1.0cm}{\centering\small Traffic\\App.}}}
& Assisting Take-Over Maneuver~\cite{AssistingDrivers}
    & \checkmark & $\times$ & request-based & $\times$ & $\times$ & \checkmark & $\times$ \\
& Bidirectional Mixed-Traffic Take-Over~\cite{BidirectionalMixedTraffic}
    & \checkmark & $\times$ & anytime & $\times$ & $\times$ & \checkmark & $\times$ \\
& Highway Merging~\cite{priority_based_highway_merging}
    & $\times$ & \checkmark & n/a & $\times$ & $\times$ & \checkmark & \checkmark \\

\bottomrule
\end{tabular}}
\end{table*}

\subsection{Global Coordination}
A common strategy for collision avoidance in distributed multi-agent systems is to impose a global coordination structure. In~\cite{Alrifaee_priority}, agents are assigned priorities and solve their optimization problems sequentially while considering higher-priority solutions, but this requires global knowledge. Similarly,\cite{ROQUE20203156} employs leader–follower formation control, where the leader acts as a central coordinator. While this ensures collision-free formations, it relies on a fixed hierarchy and communication network. The approach in\cite{Zanon_semi_distributed_SQP_intersections} proposes a Sequential Quadratic Programming (SQP)-based formulation, where a central node manages intersections by collecting locally optimized time slots from vehicles and distributing them as initialization for local MPC problems. Although these methods provide strong guarantees, they depend on centralized coordination or tight communication loops, which limits their flexibility in plug-and-play scenarios.
Relatedly,~\cite{rudolf_reiter_hierarchical} propose a hierarchical RL-NMPC planner for strategic autonomous racing, but it assumes fixed opponents and restrictive prediction/perfect tracking, effectively relying on global scene awareness rather than plug-and-play, graph-based local coordination.

\subsection{Additional Constraints}
Another strategy for guaranteeing collision avoidance is to add constraints to local optimization problems. The proposed work falls in this category. In~\cite{FARINA20121088, Lucia2015contract}, contracts define sets that are guaranteed to contain trajectories of coupling variables, ensuring recursive feasibility through additional constraints. This enables plug-and-play operation~\cite{Lucia2015contract}, but feasibility checks may still block plug-in or plug-out requests. Similarly,~\cite{wiltz2022consistency} constructs consistency sets around reference trajectories and iteratively adjusts them until terminal and constraint requirements are satisfied. However, since these sets are not updated at every optimization step, they cannot handle dynamic network changes. Overall, while such methods provide distributed collision avoidance through contract or consistency mechanisms, they do not support anytime plug-and-play operation.
A related line of work employs control barrier functions (CBFs). For example,~\cite{goarin2024decentralizednonlinearmodelpredictive} uses exponential CBFs based on relative state estimates, while~\cite{Dimos_CBF} formulates CBF-induced QPs with auxiliary shared variables. Both approaches provide strong safety guarantees but assume full-state information or connected communication graphs, making them unsuitable for scenarios with variable agent participation.

\subsection{Traffic Scenarios}
Obstacle avoidance has been widely studied in traffic settings, where vehicles are treated as dynamic obstacles. Many works focus on merging and overtaking maneuvers~\cite{BidirectionalMixedTraffic, AssistingDrivers, priority_based_highway_merging}.  
In~\cite{BidirectionalMixedTraffic}, vehicle-to-vehicle communication is used to estimate oncoming traffic, combining autonomous and human-driven vehicles modeled via car dynamics. Similarly,~\cite{AssistingDrivers} considers multi-vehicle overtaking with shared information about traffic and intended maneuvers. While effective for overtaking, these approaches rely on communication among all involved vehicles and do not generalize to broader multi-agent scenarios or provide safety guarantees. Highway merging is addressed in~\cite{priority_based_highway_merging}, where a central unit assigns priorities to determine merge sequences. Although successful for this setting, the reliance on priority assignments and centralized coordination limits general applicability. Overall, traffic-specific approaches address well-defined scenarios but often assume cooperation, central coordination, or do not focus on formal safety guarantees, restricting their use in general distributed multi-agent systems.

\subsection{Discussion and Limitations}
While these alternatives offer valuable insights and benefits in specific contexts, it is important to stress that a direct comparison is not entirely fair. Each framework is tailored to a different trade-off between optimality, feasibility, decentralization, and flexibility, rendering direct comparisons difficult. The proposed method targets settings where real-time operation, distributed implementation, and agents joining or leaving the network dynamically are essential, and achieves a favorable balance within these objectives. Despite promising results, some limitations merit further investigation. Safety envelopes, while enabling decoupled collision avoidance, introduce conservatism by limiting per-step progress. Adaptive envelope tuning, potentially via learning-based methods, could better balance safety and performance. Moreover, although the framework relies only on neighbor-to-neighbor communication, it still requires information exchange. Moving toward communication-free distributed architectures remains an open and compelling research direction.

\section{Conclusion}
This work introduced a distributed control framework for multi-agent systems that guarantees collision-free operation with anytime plug-and-play capability. By decoupling collision avoidance constraints through cells and safety envelopes, the method enables fully distributed execution without centralized coordination.
We established recursive feasibility, proving safe anytime PnP operations, and validated the approach in simulation and hardware experiments. To our knowledge, this is the first distributed collision-avoidance framework that avoids request-based or pre-negotiated plug-and-play, laying the groundwork for scalable, flexible multi-agent applications with formal safety guarantees.

\appendix
\subsection{Proof of Proposition~\ref{prop:partitions}} \label{App:proof_partitions}
The claim follows directly from Definition~\ref{def:partitions}. Indeed, if
\[
p_{\firstagent,\step|\distime}\in \mathcal{P}_{\firstagent,\step|\distime},
\qquad
p_{\secondagent,\step|\distime}\in \mathcal{P}_{\secondagent,\step|\distime},
\]
and the inflated cells are disjoint as in~\eqref{eq:compatible_cells}, then the two positions must satisfy $\|p_{\firstagent,\step|\distime}-p_{\secondagent,\step|\distime}\| \ge \epsilon$ and using~\eqref{eq:collision_avoidance}, it follows that $h(x_{\firstagent,\step|\distime},x_{\secondagent,\step|\distime}) \ge 0$.
\subsection{Proof of Lemma~\ref{lem:cell_existence}} \label{App:proof_lem_partitions}
For each agent $\firstagent\in\mathcal M$ and each prediction step $\step\in\mathbb N_0^{N}$, define
\[
\mathcal P_{\firstagent,\step|\distime}:=\{p_{\firstagent,\step|\distime}\}.
\]
Each set is convex and, by assumption, is contained in the admissible position space
$\{C_\firstagent x_\firstagent \mid x_\firstagent\in\mathcal X_\firstagent\}$.

Now fix any $\firstagent\in\mathcal M$, any
$\secondagent\in\mathcal N_{\firstagent}(\distime)$, and any
$\step\in\mathbb N_0^{N}$. Since
\[
h(x_{\firstagent,\step|\distime},x_{\secondagent,\step|\distime}) \ge 0,
\]
by~\eqref{eq:collision_avoidance} we have
\[
\|p_{\firstagent,\step|\distime}-p_{\secondagent,\step|\distime}\| \ge \epsilon.
\]
Therefore,
\[
\big(\{p_{\firstagent,\step|\distime}\}\oplus\mathcal B_{\epsilon/2}\big)
\cap
\big(\{p_{\secondagent,\step|\distime}\}\oplus\mathcal B_{\epsilon/2}\big)
=
\emptyset,
\]
because $\mathcal B_{\epsilon/2}$ is an open ball. Hence~\eqref{eq:compatible_cells} holds, and the resulting family of cells is compatible.

\subsection{Proof of Proposition~\ref{prop:non_neighbor_safe_set}} \label{App:proof_cells1}
Since $\secondagent \notin \mathcal{N}_{\firstagent}(\distime)$, by~\eqref{eq:neighbor_set} it holds that
\[
\bar{\mathcal{R}}(p_{\firstagent,0|\distime})
\cap
\bar{\mathcal{R}}(p_{\secondagent,0|\distime})
=
\emptyset.
\]
If~\eqref{eq:pairwise_safe_set_constraint} holds, then, for every $\step\in\mathbb{N}_0^{N}$,
\[
p_{\firstagent,\step|\distime}+ \mathcal{B}_{\epsilon/2}
\subseteq
\bar{\mathcal{R}}(p_{\firstagent,0|\distime}),
\quad
p_{\secondagent,\step|\distime}+ \mathcal{B}_{\epsilon/2}
\subseteq
\bar{\mathcal{R}}(p_{\secondagent,0|\distime}).
\]
Hence these two balls are disjoint, and therefore
\[
\|p_{\firstagent,\step|\distime}-p_{\secondagent,\step|\distime}\|\ge \epsilon,
\qquad
\forall \step\in\mathbb{N}_0^{N}.
\]
Using~\eqref{eq:collision_avoidance}, the claim follows.

\subsection{Proof of Proposition~\ref{prop:safety_envelopes}} \label{App:proof_cells2}
We first prove~\eqref{eq:traj_in_safe}. For every $m\in\mathbb{N}_1^{N}$,
\[
p_{\firstagent,m|\distime}-p_{\firstagent,0|\distime}
=
\sum_{\step=0}^{m-1}
\big(p_{\firstagent,\step+1|\distime}-p_{\firstagent,\step|\distime}\big)
\in
\bigoplus_{\step=0}^{m-1}\mathcal{E}_{\firstagent,\step},
\]
where the inclusion follows from~\eqref{eq:constr_safety_envelope}. Hence,
\[
p_{\firstagent,m|\distime}-p_{\firstagent,0|\distime}
\in
\bigoplus_{\step=0}^{m-1}\mathcal{E}_{\firstagent,\step}
\subseteq
\bigoplus_{\step=0}^{N-1}\mathcal{E}_{\firstagent,\step}
\subseteq
\mathcal{A}(\mathbf{0}_3),
\]
by~\eqref{eq:sum_of_safety_envelopes}. Therefore,
\[
p_{\firstagent,m|\distime}
\in
\mathcal{A}(\mathbf{0}_3)+ p_{\firstagent,0|\distime}
=
\mathcal{A}(p_{\firstagent,0|\distime}).
\]
Since also $p_{\firstagent,0|\distime}\in \mathcal{A}(p_{\firstagent,0|\distime})$, \eqref{eq:traj_in_safe} follows.

To prove~\eqref{eq:shift_traj_in_safe}, fix any $\ell\in\mathbb{N}_0^{N-1}$ and any $m\in\mathbb{N}_{\ell+1}^{N}$. Then
\begin{align*}
    p_{\firstagent,m|\distime}-p_{\firstagent,\ell|\distime}
&=
\sum_{\step=\ell}^{m-1}
\big(p_{\firstagent,\step+1|\distime}-p_{\firstagent,\step|\distime}\big)
\\ & \in
\bigoplus_{\step=\ell}^{m-1}\mathcal{E}_{\firstagent,\step}
\subseteq
\bigoplus_{\step=0}^{N-1}\mathcal{E}_{\firstagent,\step}
\subseteq
\mathcal{A}(\mathbf{0}_3).
\end{align*}
Therefore, $p_{\firstagent,m|\distime}
\in
\mathcal{A}(\mathbf{0}_3)+ p_{\firstagent,\ell|\distime}
=
\mathcal{A}(p_{\firstagent,\ell|\distime})$.
Since also $p_{\firstagent,\ell|\distime}\in \mathcal{A}(p_{\firstagent,\ell|\distime})$, \eqref{eq:shift_traj_in_safe} follows.

\bibliographystyle{IEEEtran}
\bibliography{bibliography}
\addtolength{\textheight}{-3cm}   
%

\vspace{-1.2cm}
\begin{IEEEbiography}[{\includegraphics[width=1in,height=1.25in,clip,keepaspectratio]{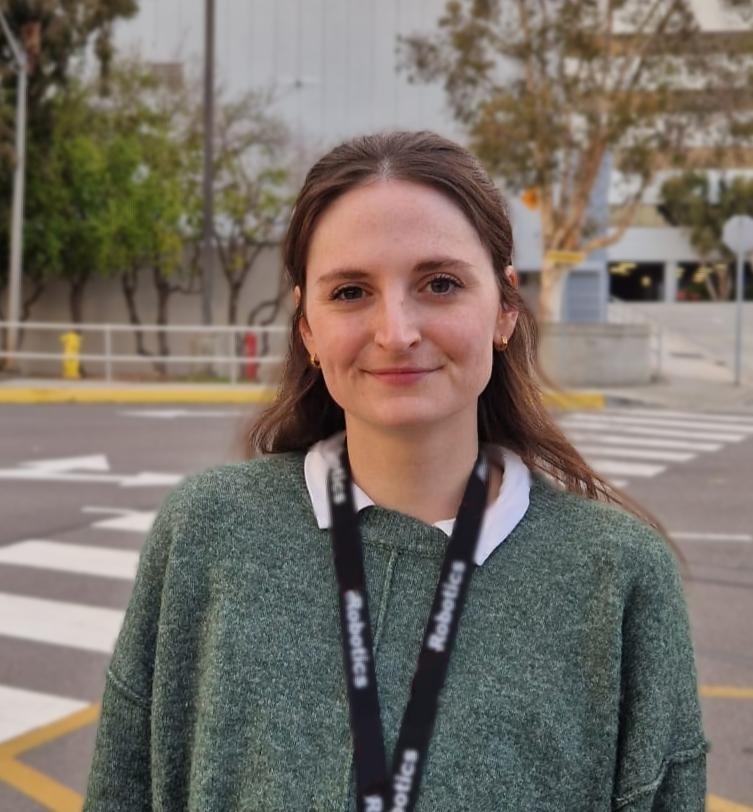}}] {Sabrina Bodmer} received her bachelor’s degree in electrical engineering in 2021 and her master’s degree in robotics, systems, and control in 2025, from ETH Zürich, Switzerland. She is currently working toward a Ph.D. degree in mechanical engineering at the ETH Zürich as well. Her research interests include the areas of multi-agent and distributed control, as well as learning-based control.
\end{IEEEbiography}

\vspace{-1.2cm}

\begin{IEEEbiography}[{\includegraphics[width=1in,height=1.25in,clip,keepaspectratio]{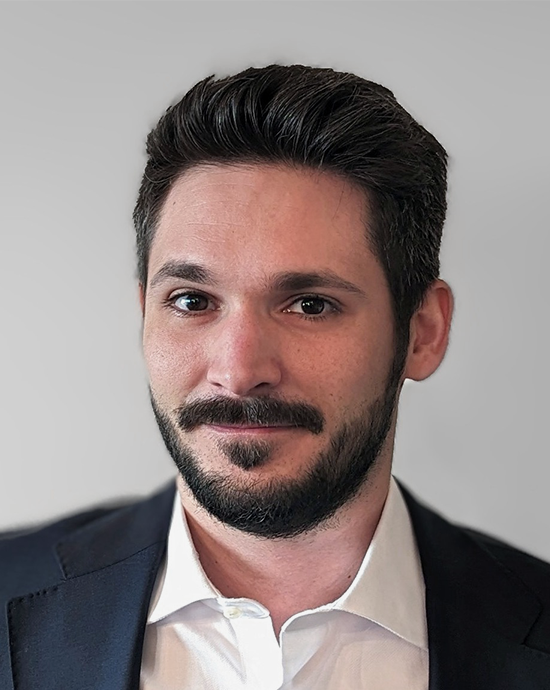}}]{Danilo Saccani} received the Ph.D. degree in Control Engineering from Politecnico di Milano, Italy, in 2023. Since 2023, he has been a Postdoctoral Researcher at EPFL, Switzerland. His research interests include modeling, optimization, and control of dynamical systems, with a focus on distributed control, model predictive control, neural network-based control, and autonomous vehicles.\end{IEEEbiography}

\vspace{-1.2cm}

\begin{IEEEbiography}[{\includegraphics[width=1in,height=1.25in,clip,keepaspectratio]{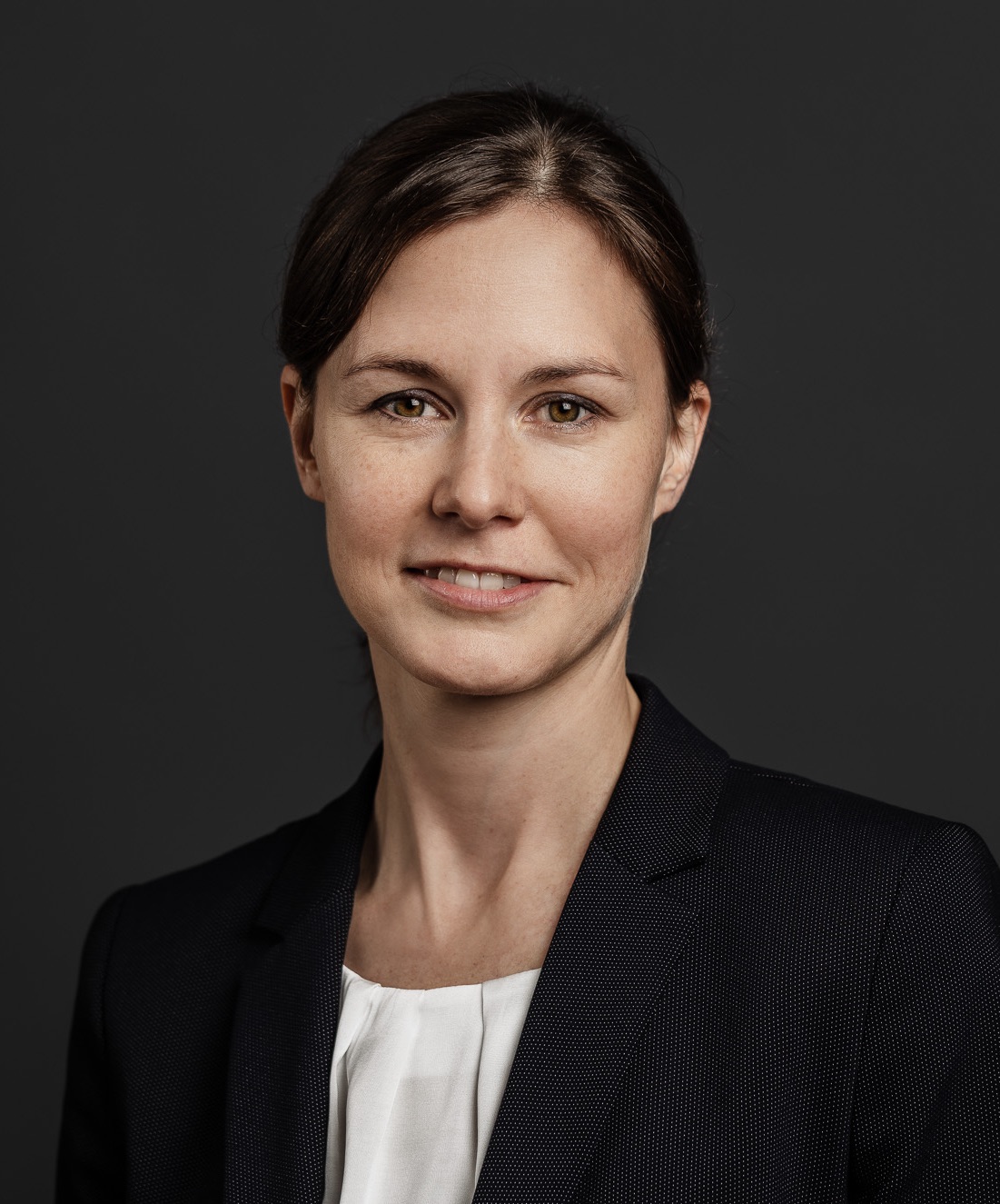}}]{Melanie N. Zeilinger} is an Associate Professor at ETH Zurich, Switzerland. She received the Diploma degree in engineering cybernetics from the University of Stuttgart, Germany, in 2006, and the Ph.D. degree with honors in electrical engineering from ETH Zurich, Switzerland, in 2011. From 2011 to 2012 she was a Postdoctoral Fellow with the Ecole Polytechnique Federale de Lausanne (EPFL), Switzerland. She was a Marie Curie Fellow and Postdoctoral Researcher with the Max Planck Institute for Intelligent Systems, Tubingen, Germany until 2015 and with the Department of Electrical Engineering and Computer Sciences at the University of California at Berkeley, CA, USA, from 2012 to 2014. From 2018 to 2019 she was a professor at the University of Freiburg, Germany. Her current research interests include safe learning-based control, as well as distributed control and optimization, with applications to robotics and human-in-the loop control.
 \end{IEEEbiography}

 \vspace{-1.2cm}

\begin{IEEEbiography}[{\includegraphics[width=1in,height=1.25in,clip,keepaspectratio]{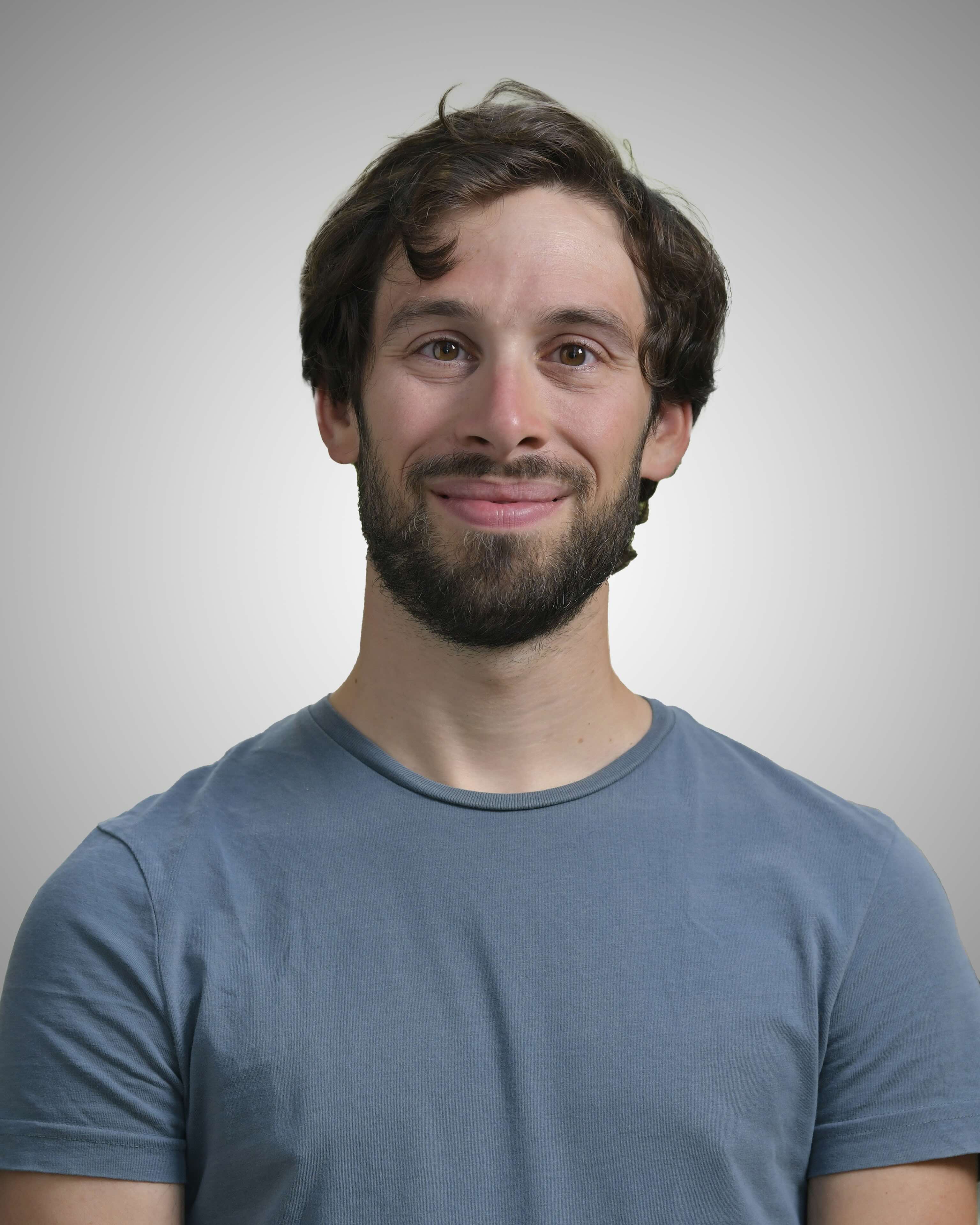}}] {Andrea Carron} received his Ph.D. in control engineering from the University of Padova in 2016. He is a Senior Scientist at ETH Zurich and previously held a postdoctoral position with the Intelligent Control Systems Group. He has also been a Visiting Researcher at UC Riverside, the Max Planck Institute, and UC Santa Barbara. His research interests include safe learning, learning-based control, multi-agent systems, and robotics.
\end{IEEEbiography}
\vfill

\end{document}